\newcommand{\alert}{\textbf}
\newcommand{\doi}[1]{\url{https://doi.org/#1}}
\newcommand{\isbn}[1]{\url{https://isbnsearch.org/isbn/#1}}
\newcommand{\arxiv}[1]{\href{https://arxiv.org/abs/#1}{preprint arXiv:#1}}
\newcommand{\web}[1]{\url{#1}}
\renewcommand{\emptyset}{\varnothing}
\numberwithin{equation}{section}
\numberwithin{figure}{section}
\newtheorem{thm}{Teorema}[section]
\newtheorem{prop}[thm]{Proposizione}
\newtheorem{cor}[thm]{Corollario}
\newtheorem{df}[thm]{Definizione}
\newtheorem{ex}[thm]{Esempio}
\newtheorem{pro}[thm]{Problema}
\let\c@equation\c@figure
\newcommand{\R}{\mathbb{R}}
\newcommand{\C}{\mathbb{C}}
\newcommand{\Z}{\mathbb{Z}}
\newcommand{\N}{\mathbb{N}}
\newcommand{\wang}[9]{
\coordinate (o) at ({#1});
\coordinate (a) at ($(o)+(-0.5,-0.5)$);
\coordinate (b) at ($(o)+(-0.5,0.5)$);
\coordinate (c) at ($(o)+(0.5,0.5)$);
\coordinate (d) at ($(o)+(0.5,-0.5)$);

\filldraw[{#2},draw=black,very thin] (a) -- (o) -- (b);
\filldraw[{#3},draw=black,very thin] (b) -- (o) -- (c);
\filldraw[{#4},draw=black,very thin] (c) -- (o) -- (d);
\filldraw[{#5},draw=black,very thin] (d) -- (o) -- (a);

\draw (a) -- (b) -- (c) -- (d) -- cycle;

\begin{scope}[font=\small]
\draw (barycentric cs:a=1,o=1.3,b=1) node {\textsf{#6}};
\draw (barycentric cs:b=1,o=1.3,c=1) node {\textsf{#7}};
\draw (barycentric cs:a=1,o=1.3,d=1) node {\textsf{#8}};
\draw (barycentric cs:c=1,o=1.3,d=1) node {\textsf{#9}};
\end{scope}
}
\newcommand{\wangA}[1]{\wang{#1}{cyan!50}{green!50}{cyan!50}{green!50}{0}{1}{1}{0};}
\newcommand{\wangB}[1]{\wang{#1}{cyan!50}{green!50}{red!50}{green!50}{0}{1}{1}{2};}
\newcommand{\wangC}[1]{\wang{#1}{red!50}{green!50}{cyan!50}{green!50}{2}{1}{1}{0};}
\newlength{\raggio} 
\newlength{\mra}
\newlength{\bmra}
\newlength{\golden}
\colorlet{TA}{cyan!20}
\colorlet{TB}{green!20}
\colorlet{SA}{red!20}
\colorlet{SB}{yellow!20}
\colorlet{GTA}{white}
\colorlet{GTB}{gray!60}
\colorlet{GSA}{white}
\colorlet{GSB}{gray!60}
\colorlet{hat}{gray!40}
\definecolor{shirt}{RGB}{27,116,146}
\def\@tocline#1#2#3#4#5#6#7{\relax
  \ifnum #1>\c@tocdepth 
  \else
    \par \addpenalty\@secpenalty\addvspace{#2}%
    \begingroup \hyphenpenalty\@M
    \@ifempty{#4}{%
      \@tempdima\csname r@tocindent\number#1\endcsname\relax
    }{%
      \@tempdima#4\relax
    }%
    \parindent\z@ \leftskip#3\relax \advance\leftskip\@tempdima\relax
    \rightskip\@pnumwidth plus4em \parfillskip-\@pnumwidth
    #5\leavevmode\hskip-\@tempdima
      \ifcase #1
       \or\or \hskip 1em \or \hskip 2em \else \hskip 3em \fi%
      #6 \hskip 0.5em \nobreak\relax
    \dotfill\hbox to\@pnumwidth{\@tocpagenum{#7}}\par
    \nobreak
    \endgroup
  \fi}
\begin{document}

\title{\vspace*{-1cm}Ordine privo di periodicità: il fascino \\[5pt] matematico delle tassellazioni}

\author[F.~D'Andrea]{\vspace*{-5mm}Francesco D'Andrea}

\address{Dipartimento di Matematica e Applicazioni ``R.~Caccioppoli'' \\ Universit\`a di Napoli Federico II \\
Complesso MSA, Via Cintia, 80126 Napoli, Italy}

\begin{abstract}
In questo saggio ripercorriamo alcuni risultati celebri della teoria delle tassellazioni.
Si tratta ovviamente di un argomento vastissimo, ed una trattazione esaustiva richiederebbe vari libri. Qui ci concentriamo sulle tassellazioni del piano, in particolare quelle aperiodiche, dai primi celebri esempi di tassellazioni di Wang e di Penrose, fino alla recente scoperta di un monotassello aperiodico (Smith et al., 2023). La scelta degli argomenti (e delle dimostrazioni riportate in dettaglio) è basata su gusti personali ed estetici, oltre che di carattere didattico.
\end{abstract}

\maketitle

\begin{multicols}{2}

\section{Introduzione}

Il problema di ricoprire una superficie piana con forme geometriche, senza lasciare spazi vuoti e senza sovrapposizioni, affascina da sempre matematici, artisti e artigiani.
Un tale ricoprimento è detto \emph{piastrellatura}, \emph{mosaico} o \emph{tassellazione}. 
Le forme di base di cui è composto sono dette \emph{tessere} o \emph{tasselli}.
Nella vita di tutti i giorni siamo continuamente circondati da tassellazioni, ad iniziare dai rivestimenti di pavimenti e pareti, che possono rappresentare elementi decorativi importanti (si pensi alle maioliche in ceramica --- o \emph{riggiole} in napoletano --- per le quali sono famose la penisola sorrentina e la costiera amalfitana). Tassellazioni si possono ammirare nei mosaici e nelle vetrate delle chiese. Tasselli dalla forma di animali stilizzati sono ricorrenti nell'opera di M.C.~Escher.
In matematica, un primo studio sistematico delle tassellazioni del piano si può trovare nell'opera di Keplero, \textit{Harmonices mundi} (1619).

Parte del fascino matematico delle tassellazioni deriva dal fatto che molti suoi problemi si possono formulare in un linguaggio comprensibile da tutti, anche se la soluzione poi a volte richiede tecniche tutt'altro che elementari. In effetti, possiamo citare almeno due matematici amatoriali che hanno dato un contributo fondamentale alla teoria. La prima è la casalinga californiana Marjorie Rice, che scoprì tassellazioni del piano con pentagoni che erano sfuggite ai matematici professionisti \cite{Sch78}. Più di recente David Smith, un tecnico della stampa in pensione, è diventato celebre per aver scoperto il primo esempio di monotassello aperiodico \cite{SMKG24a}.

In questo saggio ripercorriamo alcuni risultati celebri della teoria delle tassellazioni.
Inizieremo in Sezione~\ref{sec:ts} con alcune definizioni generali ed una discussione sulle simmetrie delle tassellazioni, e vedremo un primo esempio di tassellazione aperiodica costruita con il metodo noto come ``cut-and-project''. In Sezione~\ref{sec:pdg} useremo l'esempio dei puzzle e del gioco del domino per introdurre la nozione di ``regola di corrispondenza'' e spiegare il legame fra tassellazioni unidimensionali e grafi.
In Sezione~\ref{sec:Wang}, studieremo le tassellazioni di Wang, che sono una specie di versione bidimensionale del domino.
Passeremo quindi alle celebri tassellazioni di Penrose, iniziando con il loro studio
attraverso i triangoli di Robinson in Sezione~\ref{sec:pen1}, per poi parlare delle pentagriglie di de Brujin in Sezione~\ref{sec:pen2}. La Sezione~\ref{sec:simdin} è una breve digressione sul legame fra tassellazioni e dinamica simbolica. Nella Sezione~\ref{sec:einstein} parleremo infine brevemente del ``monotassello'' di David Smith, ed in Sezione~\ref{sec:conc} trarremo alcune conclusioni.

\section{Tassellazioni e simmetrie}\label{sec:ts}

Iniziamo con la definizione matematica esatta di tassellazione.
Per noi il \emph{piano} è quello della geometria cartesiana, l'insieme $\R^2$ di tutte le coppie di numeri reali (ascissa e ordinata di un punto in un sistema di riferimento bidimensionale).

\begin{df}
Un \alert{tassello} è un sottoinsieme chiuso di $\R^2$ con interno non vuoto.
Una famiglia $T:=\{t_i\}_{i\in I}$ di tasselli (non necessariamente finita o numerabile)
è detta \alert{tassellazione} se gli interni dei tasselli hanno intersezione vuota, in formule:
\[
\mathring{t}_i \cap\mathring{t}_j=\emptyset\;\forall\;i\neq j.
\]
Chiamiamo \alert{supporto} di $T$ l'unione $\cup_{i\in I}t_i$ dei suoi tasselli. Una \alert{tassellazione del piano} è una tassellazione il cui supporto è l'intero insieme $\R^2$.
\end{df}

Il lettore che non abbia familiarità con la topologia, può immaginare che i tasselli siano poligoni (sarà così in tutti gli esempi più importanti che discuteremo).
La nozione di tassellazione ha senso per uno spazio topologico arbitrario, ma a noi interesserà solo il caso di $\R^2$.

In Figura~\ref{fig:alveare}, in nero vediamo (una porzione di) una tassellazione del piano
con esagoni regolari, anche detta \emph{a nido d'ape}.
Notiamo che i tasselli sono tutti dello stesso tipo, più precisamente sono tutti congruenti (si possono trasformare l'uno nell'altro con una isometria).

\begin{figure}[H]
\includegraphics[page=4,width=0.8\columnwidth]{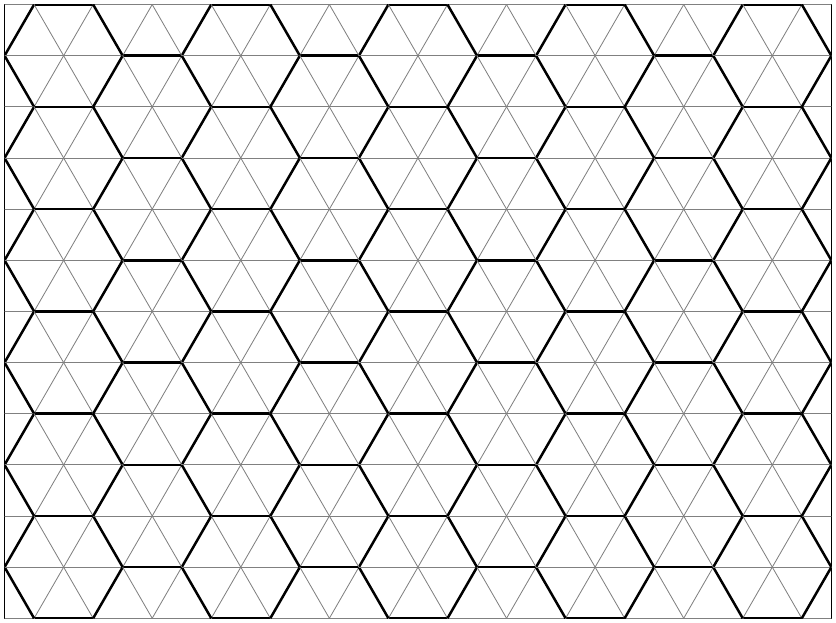}
\caption{Tassellazione esagonale.}\label{fig:alveare}
\end{figure}

\begin{df}
La classe di congruenza di un tassello e detta \alert{prototassello}.
La famiglia di prototasselli di una tassellazione è detto il suo \alert{protoinsieme}.
\end{df}

Un prototassello è quindi una forma geometrica che non ha una posizione ben definita nel piano. Ogni volta che lo posizioniamo nel piano, otteniamo uno specifico tassello. La tassellazione in Figura~\ref{fig:alveare} ha un protoinsieme formato da un unico prototassello: un esagono regolare.

Un'altra cosa che possiamo osservare in Figura~\ref{fig:alveare} sono, in grigio, tre fasci di rette parallele, passanti per i centri degli esagoni ed ortogonali ai suoi lati. Se trasliamo la figura in direzione di una qualsiasi di queste rette, di un passo pari all'altezza di un esagono, la figura non cambia. Possiamo scegliere due direzioni indipendenti, ad esempio quelle date dai vettori rossi in figura.
Assumendo che il punto di applicazione dei vettori rossi in figura sia l'origine di $\R^2$, vediamo che i centri degli esagoni formano un \emph{reticolo}, un sottoinsieme $\Z\vec{v}_1+\Z\vec{v}_2\subseteq\R^2$ dato da tutte le combinazioni intere di due vettori $\vec{v}_1$ e $\vec{v}_2$ linearmente indipendenti (nel caso in esame, $\vec{v}_1$ e $\vec{v}_2$ sono i due vettori rossi in figura).

Ogni reticolo definisce una tassellazione del piano, i cui tasselli sono le sue \emph{celle elementari}, ovvero i parallelogrammi che per lati hanno i segmenti che uniscono due vertici consecutivi del reticolo (ad esempio il parallelogramma rosso chiaro in Figura~\ref{fig:alveare}).

Una isometria $f:\R^2\to\R^2$ che lascia invariata una tassellazione $T$, ovvero tale che $f(T)=T$, è detta \emph{simmetria} della tassellazione stessa. Ricordiamo che ogni isometria si può ottenere componendo traslazioni, rotazioni e riflessioni. La tassellazione in Figura~\ref{fig:alveare} è invariante per traslazioni di parametro dato da un qualsiasi vettore del reticolo generato dai vettori rossi, per rotazioni di 60$^\circ$ attorno al centro di un qualsiasi esagono e di 120$^\circ$ attorno ad uno qualsiasi dei suoi vertici, ed è invariante per riflessione rispetto a numerose rette, fra le quali quelle in grigio in figura.

\smallskip

\begin{df}
Una tassellazione del piano è detta:\vspace{-3pt}
\begin{itemize}\itemsep=1pt
\item \alert{periodica} se esiste una traslazione non nulla che la lascia invariata,
\item \alert{aperiodica} se non è periodica,
\item \alert{bi-periodica} se esistono due traslazioni in direzioni linearmente indipendenti che la lasciano invariata.
\end{itemize}
Infine, diciamo che una tassellazione ha \alert{simmetria $n$-fold} se è invariante per rotazioni di un angolo di $2\pi/n$.
\end{df}

La tassellazione a nido d'ape è bi-periodica ed ha simmetria $6$-fold. 
Un noto teorema di cristallografia afferma che:

\begin{thm}[di restrizione cristallografica]
Se $f$ è una simmetria rotazionale di un reticolo $\Lambda$ di $\R^2$ (cioè soddisfa $f(\Lambda)=\Lambda$), allora $f$ è $n$-fold con $n\in\{1,2,3,4,6\}$.
\end{thm}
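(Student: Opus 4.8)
The plan is to exploit the fact that the trace of a linear map does not depend on the basis in which it is written, and to compute it in two different bases of $\Lambda$. First I would reduce to the linear part of $f$. Writing the rotation as $f(x)=Ax+b$, where $A$ is the matrix of a rotation by an angle $\theta$, the hypothesis $f(\Lambda)=\Lambda$ forces $A(\Lambda)=\Lambda$ as well: for $v,w\in\Lambda$ one has $f(v)-f(w)=A(v-w)$, and as $v,w$ range over the group $\Lambda$ the differences $v-w$ already exhaust $\Lambda$, while the values $f(v)-f(w)$ stay inside $\Lambda$. The same reasoning applied to $f^{-1}$ gives the reverse inclusion, so $A(\Lambda)=\Lambda$. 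It therefore suffices to analyse the rotation matrix $A$ acting linearly on the lattice.

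Next I would fix a basis $\{\vec v_1,\vec v_2\}$ of $\Lambda$, so that $\Lambda=\Z\vec v_1+\Z\vec v_2$. Since $A\vec v_1$ and $A\vec v_2$ lie in $\Lambda$, they are integer combinations of $\vec v_1,\vec v_2$; hence the matrix $M$ representing $A$ in this basis has integer entries, and in particular $\mathrm{tr}(M)\in\Z$. On the other hand, in any orthonormal basis $A$ is represented by
\[
A=\mat{\cos\theta & -\sin\theta \\ \sin\theta & \cos\theta},
\]
whose trace is $2\cos\theta$. As the trace is invariant under change of basis, I would conclude
\[
2\cos\theta=\mathrm{tr}(M)\in\Z.
\]

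Since $-1\le\cos\theta\le 1$, the integer $2\cos\theta$ can only equal $-2,-1,0,1$ or $2$, forcing $\cos\theta\in\{-1,-\tfrac12,0,\tfrac12,1\}$ and hence $\theta\in\{0^\circ,60^\circ,90^\circ,120^\circ,180^\circ\}$; these are precisely the angles $2\pi/n$ with $n\in\{1,2,3,4,6\}$, which is the claim. I do not expect a genuine obstacle: the only points requiring care are the passage to the linear part and the basis-invariance of the trace, both elementary. As an alternative I could argue geometrically with a shortest nonzero vector $\vec v\in\Lambda$: for $n\ge 7$ the lattice vector $A\vec v-\vec v$ has length $2\lvert\vec v\rvert\sin(\theta/2)<\lvert\vec v\rvert$, contradicting minimality, while $n=5$ is excluded because $A\vec v+A^{-1}\vec v=(2\cos 72^\circ)\vec v$ is then a nonzero lattice vector of length $\approx 0.62\,\lvert\vec v\rvert$. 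This works but is less uniform than the single trace computation, so I would present the trace argument as the main proof.
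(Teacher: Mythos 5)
Your proposal is correct and follows essentially the same route as the paper: both reduce to the linear part of the rotation (the paper via $f(\vec v_i)-f(\vec 0)=R\vec v_i\in\Lambda$, you via differences $f(v)-f(w)$), observe that the rotation is therefore conjugate to an integer matrix in a lattice basis, and conclude from the basis-invariance of the trace that $2\cos\theta\in\Z$. The geometric shortest-vector argument you sketch as an alternative is a genuinely different (and also valid) proof, but your main argument coincides with the paper's.
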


\begin{proof}
La dimostrazione è un semplice esercizio di algebra lineare.
In formule, una rotazione di centro $\vec{c}\in\R^2$ ha la forma
\[
f(\vec{v})=R(\vec{v}-\vec{c})+\vec{c}
\]
dove
\[
R:=\bigg(\!\begin{array}{rr}\cos\theta & -\sin\theta \\ \sin\theta & \cos\theta\end{array}\!\bigg) ,\qquad 0<\theta\leq 2\pi,
\]
è la matrice di rotazione. Sia $\Lambda=\Z\vec{v}_1+\Z\vec{v}_2$ un reticolo e
supponiamo che $f(\Lambda)=\Lambda$. Scrivendo i vettori di $\R^2$ in colonna formiamo la matrice $2\times 2$ reale:
\[
V:=\Big(\;\vec v_1\;\Big|\;\vec v_2\;\Big) .
\]
Per l'ipotesi di invarianza, $f(\vec{v}_1)-f(\vec{0})=R\vec{v}_1$ ed $f(\vec{v}_2)-f(\vec{0})=R\vec{v}_2$ sono combinazioni lineari di $\vec v_1$ e $\vec v_2$ a coefficienti interi. In termini di matrici,
\[
RV=VA
\]
per una qualche matrice $A$ intera $2\times 2$. Ma $V$ è invertibile ($\vec v_1$ e $\vec v_2$ sono linearmente indipendenti). Siccome $R$ è coniugata ad una matrice con traccia intera, la traccia di $R$, ovvero $2\cos\theta$, deve essere un intero.
Quindi $\cos\theta\in\left\{0,\frac{1}{2},-\frac{1}{2},\pm 1\right\}$, che corrisponde a
$\theta\in\{\pm\pi/2,\pm\pi/3,\pm 2\pi/3,\pi,2\pi\}$.
\end{proof}

Un teorema analogo vale per tassellazioni del piano periodiche, se almeno un tassello è limitato (cioè contenuto in un disco di raggio sufficientemente grande). Per tali tassellazioni, le uniche simmetrie rotazionali ammesse sono $n$-fold con $n\in\{1,2,3,4,6\}$ (si veda ad esempio \cite[Sez.~2.3]{DanPen} per la dimostrazione). L'ipotesi di esistenza di un tassello limitato serve ad escludere esempi patologici come la tassellazione con un singolo tassello dato dall'intero piano: essa è bi-periodica con simmetria $n$-fold per ogni valore di $n$. Per un esempio di tassellazione (non periodica) con tasselli limitati e simmetria $5$-fold si può vedere \cite[Fig.~2.7]{DanPen}.

Mentre una tassellazione periodica con simmetria rotazionale può avere tanti centri di simmetria (come nell'esempio della tassellazione a nido d'ape), in una tassellazione aperiodica se esiste un centro di simmetria questo è anche unico. Questo si dimostra facilmente per assurdo: se $f_1$ ed $f_2$ sono rotazioni non banali con centri distinti, e sono entrambe simmetrie di una tassellazione $T$, allora anche la composizione $f_1f_2f_1^{-1}f_2^{-1}$ è una simmetria, e si può verificare che tale composizione è una traslazione non nulla, cioè $T$ è periodica.

Non è difficile costruire una tassellazione aperiodica: dividiamo ciascun esagono di una tassellazione a nido d'ape in modo da ottenere una tassellazione del piano bi-periodica con rombi (Figura~\ref{fig:romper}). Quindi capovolgiamo uno qualsiasi degli esagoni (in rosso in Figura~\ref{fig:romaper}), rompendo in questo modo la simmetria traslazionale.

\begin{figure}[H]
\includegraphics[page=1,width=0.8\columnwidth]{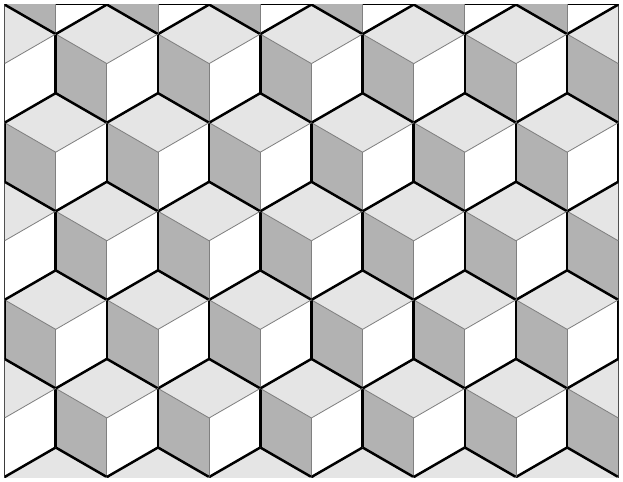}
\caption{Tassellazione periodica con rombi.}\label{fig:romper}
\end{figure}

\begin{figure}[H]
\includegraphics[page=4,width=0.8\columnwidth]{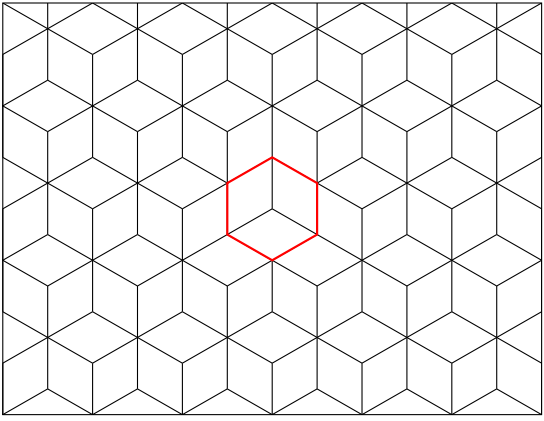}
\caption{Tassellazione aperiodica con rombi.}\label{fig:romaper}
\end{figure}

Questa tassellazione d'altronde non è particolarmente interessante. Vedremo a breve come costruire tassellazioni aperiodiche più interessanti.

Osserviamo come, in Figura~\ref{fig:romper}, la colorazione dei rombi suggerisca che la tassellazione si possa ottenere proiettando gli spigoli di cubi unitari.
Cosa succede cambiando la direzione di proiezione?
Scendiamo di una dimensione e studiamo ricoprimenti di una retta con segmenti di diversa lunghezza, e senza sovrapposizioni.
Partiamo da una tassellazione di $\R^2$ con quadrati unitari, le celle del reticolo $\Z^2$ e consideriamo la retta $\ell$ passante per l'origine e di coefficiente angolare $0<m<1$. In Figura~\ref{fig:staircase}, vediamo evidenziate in grigio le celle del reticolo che intersecano la retta $\ell$. Se proiettiamo ortogonalmente i vertici di tali quadrati che sono al di sopra della retta, otteniamo una tassellazione della retta con segmenti di due lunghezze: uno di lunghezza $1/\sqrt{1+m^2}$ (in celeste in figura), proiezione di un lato orizzontale, ed uno più corto di lunghezza $m/\sqrt{1+m^2}$, proiezione di un lato verticale di un quadrato.

Se $m=1$, chiaramente si ottiene una tassellazione periodica con segmenti tutti di uguale lunghezza.

\begin{figure}[H]
\includegraphics[width=0.9\columnwidth]{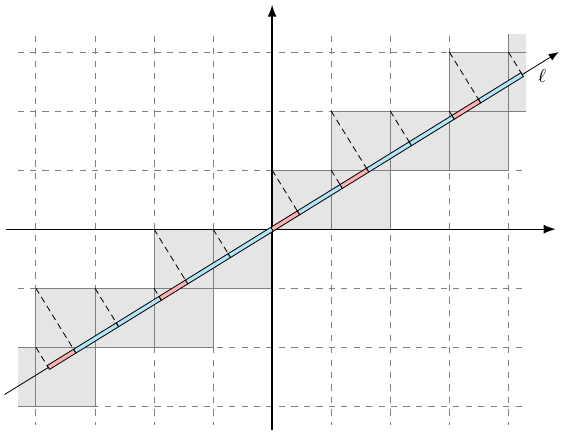}
\caption{Cut-and-project.}\label{fig:staircase}
\end{figure}

\begin{prop}
Se $m$ è irrazionale, la tassellazione di $\ell$ in Figura~\ref{fig:staircase} è aperiodica. 
\end{prop}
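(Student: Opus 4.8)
The plan is to show that periodicity of the tiling of $\ell$ would force $m$ to be rational, contradicting the hypothesis. Everything is governed by the \emph{staircase}: the monotone lattice path running just above $\ell$, formed by the edges bordering $\ell$ from above of the unit cells that $\ell$ crosses. Because $m$ is irrational the line meets no lattice point except the origin, so this path is well defined and is built from steps $(1,0)$ (a horizontal edge, projecting to a long segment of length $1/\sqrt{1+m^2}$) and $(0,1)$ (a vertical edge, projecting to a short segment of length $m/\sqrt{1+m^2}$). Writing $P_0,P_1,P_2,\dots$ for the successive vertices of the staircase, the tiling of $\ell$ is exactly the ordered sequence of their orthogonal projections, and the type (long or short) of the $i$-th tile records whether the $i$-th step is horizontal or vertical.

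First I would translate periodicity of the tiling into periodicity of this step sequence. A nonzero translation $\tau$ of $\ell$ fixing the tiling maps the discrete, ordered set of tile endpoints onto itself preserving order, hence shifts it by a fixed number $k\ge 1$ of tiles; since the two tile lengths differ, $\tau$ also preserves types, so $\mathrm{type}_{i+k}=\mathrm{type}_i$ for every $i$. Consequently the displacement $P_{i+k}-P_i$, being the sum of the steps over any window of length $k$, is the constant integer vector $\vec{w}=(k_L,k_S)$, where $k_L$ and $k_S$ count the horizontal and vertical steps in one period. In other words the staircase is invariant under translation by $\vec{w}$, that is $P_{i+k}=P_i+\vec{w}$ for all $i$.

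The key geometric step is then to exploit that the staircase stays within bounded distance of $\ell$: each vertex $P_i$ is a corner of a cell met by $\ell$, hence lies within $\sqrt{2}$ of a point of $\ell$, so the component of $P_i$ normal to $\ell$ is bounded uniformly in $i$. But $P_{i+nk}=P_i+n\vec{w}$, so if $\vec{w}$ had a nonzero component along the normal to $\ell$, that normal component would grow linearly with $n$ and be unbounded, a contradiction. Therefore $\vec{w}$ is parallel to $\ell$, i.e. $(k_L,k_S)$ is parallel to $(1,m)$, which forces $m=k_S/k_L$ to be rational (here $k_L\ne 0$, since $\ell$ is not vertical). This contradicts the irrationality of $m$, so the tiling is aperiodic.

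I expect the main obstacle to be the rigorous bookkeeping of the reduction in the first two steps, in particular checking that a translation of the tiling really induces a \emph{constant} index shift $k$ and preserves tile types, and that the period vector $\vec{w}$ is the same over every window. The boundedness argument is the conceptual heart but is short once the setup is in place; an alternative, slightly more computational route that avoids it is to compare the frequency $k_S/(k_L+k_S)$ of short tiles in one period with the limiting frequency $m/(1+m)$ obtained by counting horizontal grid-line crossings against vertical ones along $\ell$, which again yields $m=k_S/k_L$ and the same contradiction.
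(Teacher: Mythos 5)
Your proof is correct and follows essentially the same strategy as the paper's: periodicity forces a nonzero integer period vector for the set of projected lattice vertices, and since those vertices stay within bounded distance of $\ell$ while $m$ irrational prevents an integer vector from being parallel to $\ell$, one gets a contradiction. Your version is slightly more explicit in justifying why the period vector is an integer vector (via the constant index shift and type preservation), where the paper simply asserts that the period is an integer combination of the two segment lengths and instead invokes the injectivity of the projection restricted to $\Z^2$.
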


\begin{proof}
La proiezione ortogonale di $(x,y)\in\R^2$ su $\ell$ è un punto di coordinata monodimensionale
\[
f(x,y):=\frac{x+my}{\sqrt{1+m^2}}
\]
su $\ell$.
Indichiamo con $\Gamma\subset\Z^2$ l'insieme dei vertici che vengono proiettati per formare la tassellazione di $\ell$.
Per assurdo, assumiamo che l'insieme $f(\Gamma)$ sia periodico di periodo $\tau>0$, cioè resti invariato se traslato di $\tau$ lungo la retta $\ell$. Il periodo deve essere necessariamente una combinazione intera delle lunghezze dei segmenti di base, ovvero
\[
\tau=\frac{a + bm}{\sqrt{1+m^2}}=f(a,b) ,
\]
per un qualche $(a,b)\in\Z^2\smallsetminus\{(0,0)\}$. Se $f(\Gamma)$ è periodico di periodo $\tau$, si dovrebbe avere $k\tau\in f(\Gamma)$ per ogni $k\in\Z$ (questi sono i punti nell'orbita per traslazioni dell'origine). Ma
\[
k\tau=kf(a,b)=f(ka,kb) .
\]
Siccome $m$ è irrazionale, la restrizione di $f$ a $\Z^2$ è iniettiva, e da
$f(ka,kb)\in f(\Gamma)$ deduciamo che $P_k:=k(a,b)\in\Gamma$ per ogni $k\in\Z$.
La distanza di $P_k$ da $\ell$ cresce linearmente con $k$, e per $k$ sufficientemente grande $P_k$ non può essere l'estremo di una cella che interseca la retta $\ell$, arrivando in questo modo ad una contraddizione.
\end{proof}

Questo esempio mostra come si possa costruire una tassellazione aperiodica a partire da una periodica in dimensione più alta. Tale metodo è noto come \emph{cut-and-project}, ed è stato introdotto per la prima volta da
Y.~Meyer nel campo dell'analisi armonica \cite{Mey72}.
Quando $m$ è l'inverso della \emph{sezione aurea}, si ottiene una tassellazione, detta di Fibonacci, con proprietà molto simili a quelle delle tassellazioni di Penrose, che discuteremo più avanti. Ad esempio, ogni porzione finita di questa tassellazione è ripetuta infinite volte lungo la retta $\ell$ (diciamo che la tassellazione è \emph{ripetitiva}).

\section{Puzzle, domino e grafi}\label{sec:pdg}

Un aspetto non trascurabile della matematica è quello ricreativo. Vediamo cosa possiamo imparare sulle tassellazioni studiando il gioco del Tetris, i puzzle ed il domino.
Un attento giocatore avrà notato che ogni pezzo del Tetris è formato da quattro quadrati adiacenti, anche detti \emph{tetramini}. Una caratteristica dei tetramini è che due quadrati adiacenti condividono un intero lato.

Una tassellazione con poligoni è detta \emph{edge-to-edge} se ogni coppia di tasselli adiacenti condivide un intero lato.
Nella figura seguente, la tassellazione di sinistra è edge-to-edge, mentre quella di destra no:
\begin{center}
\begin{tikzpicture}[semithick,yscale=0.5]

\fill[brown!20] (0,0) rectangle (2,2);

\draw
(0,0) -- (2,0)
(0,1) -- (2,1)
(0,2) -- (2,2)
(0,0) -- (0,2)
(1,0) -- (1,2)
(2,0) -- (2,2);

\begin{scope}[xshift=3cm]
\fill[brown!20] (0,0) rectangle (2,1);
\fill[brown!20] (0.5,1) rectangle (2.5,2);

\draw
(0,0) -- (2,0)
(0,1) -- (2.5,1)
(0.5,2) -- (2.5,2)
(0,0) -- (0,1)
(1,0) -- (1,1)
(2,0) -- (2,1)
(0.5,1) -- (0.5,2)
(1.5,1) -- (1.5,2)
(2.5,1) -- (2.5,2);
\end{scope}

\end{tikzpicture}
\end{center}
In generale, una \emph{poliforma} è una tassellazione edge-to-edge con supporto connesso formata unendo poligoni tutti congruenti fra loro.
Un caso speciale di interesse per noi sono quadrilateri con coppie di lati consecutivi della stessa lunghezza. Un tale poligono è detto \emph{dardo} se concavo, e \emph{aquilone} se convesso:
\begin{equation}\label{eq:dardoaq}
\begin{tikzpicture}[semithick,font=\footnotesize,baseline=(current bounding box.center)]

\coordinate (v1) at (0,0);
\coordinate (v2) at (54:1.618);
\coordinate (v3) at (0,1.618);
\coordinate (v4) at (126:1.618);
\coordinate (v5) at (-3,1.618);
\coordinate (v6) at ($(v5)+(-54:1.618)$);
\coordinate (v7) at ($(v5)+(0,-1)$);
\coordinate (v8) at ($(v5)+(234:1.618)$);
\coordinate (v9) at ($(v5)+(0,-2.5)$);

\node[left] at (-3.2,-0.2) {Dardo};
\node[right] at (0.2,-0.2) {Aquilone};

\draw[gray!40,thick,smooth,tension=0.6] plot coordinates{(v1) (-0.2,-0.4) (0.2,-0.8) (0,-1.2)};

\draw[gray!40,thick] (v7) -- (v9);

\foreach \k in {0,...,3} \draw[thick,gray!40] ($(v9)+(0,0.15*\k)$) -- ++(-30:0.2) ($(v9)+(0,0.15*\k)$) -- ++(210:0.2);

\draw[fill=cyan!30] (v1) -- (v2) -- (v3) -- (v4) -- cycle;
\draw[fill=red!30] (v5) -- (v6) -- (v7) -- (v8) -- cycle;
\draw[dashed,gray] (v1) -- (v3) (v5) -- (v7);


\end{tikzpicture}
\end{equation}
Una poliforma ottenuta unendo aquiloni tutti uguali è detta \emph{poliaquilone}. 

Anche un puzzle è una specie di tassellazione. Tipicamente, i tasselli sono ottenuti da dei quadrati unitari deformandone leggermente i lati, in modo da avere delle sporgenze e delle rientranze. In Figura~\ref{fig:puzzle} vediamo una tassellazione con tessere di un puzzle il cui supporto è un rettangolo.

\begin{figure}[H]
\includegraphics[page=1]{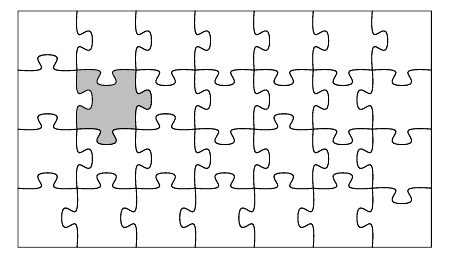}
\caption{Puzzle.}\label{fig:puzzle}
\end{figure}

I prototasselli di questa tassellazione sono i cinque pezzi seguenti:
\begin{center}
\includegraphics[page=2]{puzzle.pdf}
\end{center}
che possono essere posizionati traslati e ruotati in modo da formare il motivo in Figura~\ref{fig:puzzle}.
Osserviamo che l'unico modo per coprire lo spazio bianco che corrisponde alla rientranza di uno di questi tasselli è incastrandolo con un altro tassello. Con un abuso di terminologia, possiamo dire che una tassellazione con questi prototasselli è necessariamente edge-to-edge. Non solo, due tasselli possono essere adiacenti solo se il lato comune corrisponde ad una rientranza in uno dei tasselli e ad una sporgenza nell'altro.

Anzichè utilizzare tessere complicate, possiamo ottenere un gioco
equivalente usando quadrati con lati orientati.
Possiamo trasformare ogni sporgenza in un lato orientato in senso orario e ogni rientranza in un lato orientato in senso antiorario. Ai lati dritti non è associata alcuna orientazione. I cinque prototasselli 
del puzzle in Figura \ref{fig:puzzle} diventano:
\begin{center}
\newlength{\spazio}
\setlength{\spazio}{1.65cm}
\begin{tikzpicture}[>=Straight Barb]

\foreach \k in {1,...,5} \filldraw[semithick,draw=black,fill=gray!50] (\k*\spazio,0) rectangle +(1,1);

\begin{scope}[decoration={
    markings,
    mark=at position 0.5 with {\arrow[scale=1.1,xshift=1.5pt]{>}}}
    ]

\draw[semithick,postaction={decorate}] (\spazio,1) -- +(1,0);
\draw[semithick,postaction={decorate}] ($(\spazio,0)+(1,0)$) -- +(0,1);

\draw[semithick,postaction={decorate}] (2*\spazio,1) -- +(1,0);
\draw[semithick,postaction={decorate}] ($(2*\spazio,0)+(1,0)$) -- +(0,1);
\draw[semithick,postaction={decorate}] (2*\spazio,0) -- +(0,1);

\draw[semithick,postaction={decorate}] (3*\spazio,1) -- +(1,0);
\draw[semithick,postaction={decorate}] ($(3*\spazio,0)+(1,0)$) -- +(0,1);
\draw[semithick,postaction={decorate}] (3*\spazio,0) -- +(1,0);
\draw[semithick,postaction={decorate}] (3*\spazio,1) -- +(0,-1);

\draw[semithick,postaction={decorate}] (4*\spazio,1) -- +(1,0);
\draw[semithick,postaction={decorate}] ($(4*\spazio,0)+(1,1)$) -- +(0,-1);
\draw[semithick,postaction={decorate}] (4*\spazio,0) -- +(1,0);
\draw[semithick,postaction={decorate}] (4*\spazio,1) -- +(0,-1);

\draw[semithick,postaction={decorate}] (5*\spazio,0) -- +(1,0);
\draw[semithick,postaction={decorate}] ($(5*\spazio,0)+(1,0)$) -- +(0,1);
\draw[semithick,postaction={decorate}] ($(5*\spazio,0)+(1,1)$) -- +(-1,0);
\draw[semithick,postaction={decorate}] (5*\spazio,1) -- +(0,-1);

\end{scope}

\end{tikzpicture}
\end{center}
Possiamo quindi considerare quelle tassellazioni edge-to-edge ottenute unendo quadrati lungo lati con frecce di verso concorde.
Questo genere di restrizioni va sotto il nome di \emph{regole di corrispondenza}. Aggiungere delle decorazioni e delle regole permette di semplificare la forma dei tasselli senza cambiare il problema matematico oggetto di studio.
La Figura~\ref{fig:puzzle}, sostituendo sporgenze e rientranze con decorazioni come spiegato, diventa la tassellazione in Figura~\ref{fig:matching}.

\begin{figure}[H]
\begin{tikzpicture}[semithick,>=Straight Barb]

\draw (0,0) grid (7,4);
\fill[gray!50] (1,2) rectangle +(1,1);

\begin{scope}[decoration={
    markings,
    mark=at position 0.5 with {\arrow[scale=1.1,xshift=1.5pt]{>}}}
    ]

\foreach \k in {1,...,6} {
	\draw[postaction={decorate}] (\k,4) -- +(0,-1);
	\draw[postaction={decorate}] (\k,1) -- +(1,0);
	\draw[postaction={decorate}] (\k,0) -- +(0,1);
	}

\foreach \j in {2,3} {
	\foreach \k in {1,...,5} {
		\draw[postaction={decorate}] (\k,\j) -- +(0,-1);
		}
	}

\foreach \j in {1,2} \draw[postaction={decorate}] (6,\j) -- +(0,1);

\foreach \j in {1,2,3} \draw[postaction={decorate}] (0,\j) -- +(1,0);

\foreach \k in {2,...,7} \draw[postaction={decorate}] (\k,3) -- +(-1,0);

\foreach \k in {2,4} \draw[postaction={decorate}] (\k,2) -- +(1,0);
\foreach \k in {2,4,6,7} \draw[postaction={decorate}] (\k,2) -- +(-1,0);

\end{scope}

\end{tikzpicture}
\caption{Regole di corrispondenza.}\label{fig:matching}
\end{figure}

Passiamo da tessere quadrate a coppie di quadrati, ovvero tessere del domino.
Immaginiamo di avere un certo numero di tessere del domino disposte orizzontalmente, e di volerle posizionare nel piano in modo da formare una tassellazione. Cambiamo però leggermente le regole del gioco e permettiamo che le tessere vengano traslate, ma non ruotate (in altri termini, cambiamo la nozione di prototassello in classe di equivalenza rispetto alle sole traslazioni).
Possiamo allora pensare una tessera del domino come una coppia $(l,r)$, dove $l,r\in\{1,\ldots,n\}$ rappresenta il numero di pallini nella casella di sinistra risp.\ destra (e, tipicamente, $n=6$). Nell'esempio
\begin{center}
\includegraphics[page=1]{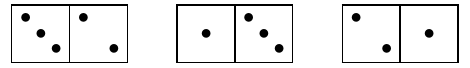}
\end{center}
vediamo le tessere $(3,2)$, $(1,3)$ e $(2,1)$.
Immaginiamo di voler tassellare con queste tessere una striscia orizzontale di altezza unitaria, in modo che quadrati adiacenti siano decorati con lo stesso numero di pallini.
Siccome l'altezza della striscia (e dei quadrati) è irrilevante, il problema è equivalente ad uno di tassellazione della retta reale.

Un modo per ricoprire la striscia $\R\times [0,1]$ con questo insieme di tessere è ripetere all'infinito la successione di tre tasselli seguente:
\begin{center}
\includegraphics[page=2]{domino.pdf}
\end{center}
Tale tassellazione è, per costruzione, periodica. Qualunque traslazione orizzontale di sei quadratini la lascia invariata.

Indichiamo con $V:=\{1,\ldots,n\}$ ($n\geq 1$) l'insieme usato per etichettare i quadrati (ogni elemento di $V$ corrisponde ad un numero di pallini).
Dato un insieme qualsiasi di tessere del domino, ovvero un sottoinsieme $E\subseteq V\times V$, ci possiamo chiedere se esiste una tassellazione
della striscia $\R\times [0,1]$ con protoinsieme $E$, ed in particolare se ne esiste una periodica oppure se ogni tassellazione con protoinsieme $E$ è aperiodica.
Per rispondere a queste domande possiamo tradurre il problema in uno di geometria combinatoria.

Ai fini della nostra discussione, definiamo un \emph{grafo} come una coppia $(V,E)$ in cui $V$ è un insieme finito (i \emph{vertici}) ed $E$ un sottoinsieme di $V\times V$ (gli \emph{archi}). 
Possiamo rappresentare graficamente un grafo disegnando un puntino $v$ per ogni vertice ed una freccia dal vertice $v_1$ al vertice $v_2$ se $(v_1,v_2)\in E$. 
Un \emph{cammino} (orientato), detto anche \emph{passeggiata}, in un grafo $(V,E)$ è una successione di vertici
$(v_0,v_1,\ldots,v_k)$ tali che
\begin{equation}\label{eq:cammino}
(v_{i-1},v_i)\in E
\end{equation}
per ogni $0<i\leq k$. Un \emph{cammino infinito} è una funzione $v:\N\to V$ soddisfacente la medesima condizione \eqref{eq:cammino}, per ogni $i\geq 1$. Un \emph{cammino doppiamente infinito} è una funzione $v:\Z\to V$ soddisfacente \eqref{eq:cammino}, per ogni $i\in\Z$.

Nel linguaggio della teoria dei grafi, una collezione di tessere del domino non è altro che l'insieme degli archi di un grafo, e una tassellazione della striscia $\R\times [0,1]$ non è altro che un cammino doppiamente infinito nel grafo. Ad esempio, l'insieme di quattro tessere
\begin{center}
\includegraphics[page=3]{domino.pdf}
\end{center}
corrisponde al grafo:
\begin{center}
\begin{tikzpicture}[
inner sep=3pt,
font=\footnotesize,
>=stealth,
node distance=2cm,
main node/.style={circle,inner sep=1.5pt,fill=black},
freccia/.style={->,shorten >=2pt,shorten <=2pt,semithick},
ciclo/.style={out=50, in=130, loop, distance=2cm, ->,semithick}
]

\clip (-0.5,-0.5) rectangle (4.4,1.4);

\node[main node] (1) {};
\node (2) [main node,right of=1] {};
\node (3) [main node,right of=2] {};

\filldraw (1) circle (0.06) node[below left] {$1$};
\filldraw (2) circle (0.06) node[below left] {$2$};
\filldraw (3) circle (0.06) node[below=2pt] {$3$};

\path[freccia] (2) edge[ciclo] (2);

\path[freccia]
	(1) edge (2)
	(2) edge[bend left=20] (3)
	(3) edge[bend left=20] (2);

\end{tikzpicture}\smallskip
\end{center}

Non è difficile dimostrare che in un grafo
se esiste un cammino infinito, allora esiste un circuito (un cammino finito in cui il primo ed ultimo vertice sono uguali), e quindi esiste un cammino doppiamente infinito e periodico (ottenuto ripetendo all'infinito il circuito).
Qualunque sia l'insieme $E$ di tessere considerato, si possono quindi verificare solo due casi:
\begin{enumerate}
\item non esiste tassellazione di $\R\times [0,1]$ con le tessere dell'insieme di $E$,
\item esistono tassellazioni di $\R\times [0,1]$ con le tessere di $E$, ed in particolare ne esistono periodiche.
\end{enumerate}
Se oltre alle traslazioni permettiamo anche che le tessere del domino siano ruotate, il problema diventa banale. Per ogni $(i,j)\in E$ si avrà anche $(j,i)\in E$, e se $E$ è non vuoto possiamo costruire un cammino doppiamente infinito andando avanti e indietro fra i due vertici di un qualsiasi arco.

\section{I tasselli di Wang}\label{sec:Wang}

Le tassellazioni di Wang \cite{Wan61} sono una versione in due dimensioni del problema del domino.
Un \emph{tassello di Wang} è un quadrato di lato unitario, più precisamente una cella del reticolo $\Z^2$, con lati colorati.
Per rendere più chiare le figure,
si usa in effetti colorare non i lati, ma piuttosto i quattro triangoli in cui il tassello viene suddiviso dalle diagonali del quadrato.
Ad esempio, un quadrato con i lati nord e sud verdi, est rosso e ovest celeste, è rappresentato come segue: 
\begin{center}
\begin{tikzpicture}

\wang{0,0}{cyan!50}{green!50}{red!50}{green!50}{}{}{}{};

\end{tikzpicture}
\end{center}
Associando a ciascun colore una etichetta presa da un insieme $C$ fissato,
un tassello di Wang $t$ può essere pensato come una quaterna $(t_n,t_s,t_o,t_e)$ di elementi di $C$ ($n/s/o/e$ sta per nord/sud/ovest/est, rispettivamente).
Come nel caso del domino, i tasselli di Wang possono essere traslati \emph{ma non ruotati}. Vogliamo ricoprire il piano posizionando tasselli di Wang nelle celle di $\Z^2$, in modo che quadrati adiacenti condividano esclusivamente lati con lo stesso colore.

Più formalmente, dato un insieme $P\subseteq C^4$ di tasselli di Wang, con colori presi da un insieme finito $C$, una \emph{tassellazione di Wang} con protoinsieme $P$ è una funzione $t:\Z^2\to P$ tale che
\begin{equation}\label{eq:wang}
\begin{split}
t(i,j)_e &=t(i+1,j)_o \;, \\
t(i,j)_n &=t(i,j+1)_s \;,
\end{split}
\end{equation}
per ogni $(i,j)\in\Z^2$.
Una tassellazione di Wang si dice \emph{periodica} se esiste $(a,b)\in\Z^2\smallsetminus\{(0,0)\}$ tale che $t(i+a,j+b)=t(i,j)$ per ogni $(i,j)\in\Z^2$.
Una tassellazione di Wang si dice \emph{bi-periodica} se esistono $a,b\in\Z\smallsetminus\{0\}$ tale che $t(i+a,j)=t(i,j)=t(i,j+b)$ per ogni $(i,j)\in\Z^2$.

Un esempio è il protoinsieme formato dai tre tasselli:
\begin{center}
\begin{tikzpicture}
\wangA{0,0}
\wangB{3,0}
\wangC{6,0}
\end{tikzpicture}
\end{center}
con il quale è possibile costruire sia tassellazioni bi-periodiche che aperiodiche.
Una tassellazione bi-periodica è ad esempio quella costante, che associa ad ogni cella di $\Z^2$ il primo tassello. Una porzione di questa tassellazione ha il seguente aspetto:
\begin{center}
\begin{tikzpicture}
\foreach \x in {0,...,5} {
	\foreach \y in {0,...,2} {
		\wangA{\x,\y}
		}
	}
\end{tikzpicture}
\end{center}
Per ottenere una tassellazione aperiodica possiamo usare un trucco analogo a quello utilizzato nel caso della tassellazione a nido d'ape. Partiamo dalla tassellazione costante precedente e sostituiamo una qualsiasi coppia di tessere adiacenti come segue:
\begin{center}
\begin{tikzpicture}
\foreach \x in {0,...,5} {
	\foreach \y in {0,...,2} {
		\wangA{\x,\y}
		}
	}
\wangB{2,1}
\wangC{3,1}
\end{tikzpicture}
\end{center}
rompendo in questo modo la simmetria traslazionale.

\begin{thm}\label{thm:wang}
Se esiste una tassellazione di Wang periodica con protoinsieme $P$, allora esiste anche una tassellazione bi-periodica con lo stesso protoinsieme.
\end{thm}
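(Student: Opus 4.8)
The plan is to study the group of translational periods
\[
G:=\{(u,v)\in\Z^2 : t(i+u,\,j+v)=t(i,j)\ \text{for all }(i,j)\},
\]
which is a nonzero subgroup of $\Z^2$ by hypothesis, and to show that we can always exhibit a tiling (with the same protoinsieme $P$) whose period group has rank $2$. That this suffices is an elementary observation I would dispatch first: if $G$ contains two linearly independent vectors $(a_1,b_1),(a_2,b_2)$, then it contains the integer combinations $b_2(a_1,b_1)-b_1(a_2,b_2)=(D,0)$ and $a_2(a_1,b_1)-a_1(a_2,b_2)=(0,-D)$, where $D=a_1b_2-a_2b_1\neq0$. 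These are a nonzero horizontal and a nonzero vertical period, which is exactly the bi-periodicity required. Thus it remains to treat the case $G=\Z\vec g$ of rank $1$, where the given tiling has no axis-parallel period and a genuinely new tiling must be constructed.

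For the rank-$1$ case I write $\vec g=(a,b)$ (replacing $\vec g$ by $-\vec g$ if needed). The heart of the argument is the sub-case $b\neq0$, where I would encode the whole two-dimensional tiling as a single one-dimensional word over a finite alphabet. Grouping the rows into horizontal strips of height $b$, set $S_k(i):=\big(t(i,kb),t(i,kb+1),\dots,t(i,(k+1)b-1)\big)\in P^{b}$. The period relation, rewritten as $t(i,j+b)=t(i-a,j)$, gives $S_{k+1}(i)=S_k(i-a)$, hence $S_k(i)=S_0(i-ka)$: the entire tiling is a ``staircase'' determined by the single bi-infinite word $S_0\colon\Z\to P^{b}$ over the finite alphabet $P^{b}$. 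The matching rules \eqref{eq:wang} translate into constraints on $S_0$ that are local, coupling letters at distance at most $|a|$ (internal vertical matching is a condition on each individual letter; horizontal matching couples consecutive letters; vertical matching across a strip seam couples $S_0(i)$ and $S_0(i-a)$). Now a pigeonhole on the windows $W_n:=(S_0(n),\dots,S_0(n+|a|))$, of which there are only finitely many, yields $m<m'$ with $W_m=W_{m'}$; letting $S_0'$ be the $p$-periodic word ($p:=m'-m$) that agrees with $S_0$ on $[m,m')$, every local constraint of $S_0'$ already occurs in $S_0$, so $S_0'$ is admissible. Re-expanding the staircase gives a valid tiling $t'$ using only tiles of $P$, which keeps the period $(a,b)$ and acquires the horizontal period $(p,0)$; these are independent because $b\neq0$, so $t'$ has rank-$2$ period group and the first step concludes.

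The leftover case $b=0$ (so $a\neq0$) is easier and symmetric: here $t$ is already horizontally $a$-periodic, so each row $t(\cdot,j)$ is an $a$-periodic sequence drawn from the finite set $P^{a}$, two rows must coincide, and splicing in the vertical direction produces a tiling with a vertical period alongside the horizontal one, again of rank $2$. I expect the main obstacle to be precisely the diagonal sub-case $a,b\neq0$: there the given tiling has no axis-parallel symmetry whatsoever, so one cannot read off a finite ``transverse'' alphabet directly as in the axis cases, and the decisive idea is the strip encoding that linearises the problem. Two points will need care in a full write-up: confirming that the spliced word genuinely satisfies the seam constraint (which couples letters at distance $|a|$ and so fixes the window length at $|a|+1$), and confirming that the reconstructed $t'$ draws its tiles only from $P$ — both of which follow once the local nature of the constraints is made explicit.
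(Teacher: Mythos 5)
Your argument is correct, and at bottom it runs on the same engine as the paper's proof --- reduce the two-dimensional problem to a one-dimensional word over a finite alphabet and extract a period by finiteness --- but the two proofs are organized differently enough to be worth contrasting. The paper slices the tiling into geometric pieces adapted to the period: vertical strips when the period is $(0,b)$, horizontal strips when it is $(a,0)$, and staircase-shaped regions (Figura~\ref{fig:scala}) when $a,b>0$; each slice is a complete periodic object, there are finitely many of them (at most $|P|^{ab}$), and they become vertices of a finite graph in which the given tiling is a doubly infinite path, so the lemma of Sezione~\ref{sec:pdg} yields a periodic path and hence a bi-periodic tiling. You do two things differently. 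First, you isolate the reduction that a rank-$2$ period group automatically contains axis-parallel periods $(D,0)$ and $(0,-D)$; the paper leaves this implicit, even though a periodic path of staircases a priori produces two independent periods that are not axis-parallel, so making it explicit is a genuine improvement in rigor. Second, in the diagonal case you linearize the whole tiling into a single bi-infinite word $S_0$ over $P^{b}$ via $S_k(i)=S_0(i-ka)$, at the price of a seam constraint of range $|a|$ instead of $1$; your pigeonhole-and-splice on windows of length $|a|+1$ is then exactly the paper's ``infinite path implies circuit'' lemma applied to the de~Bruijn graph of those windows, so the two mechanisms are interchangeable. What your version buys is uniformity (one word, one splicing lemma, one final reduction); what the paper's buys is that each staircase already encapsulates the diagonal periodicity, so only nearest-neighbour compatibilities remain. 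The two points you flag for care do go through: one checks that every length-$(|a|+1)$ window of the spliced word is a factor of the original $S_0$ (this holds even when the splice period $p$ is at most $|a|$, by iterating the window identity $S_0(j)=S_0(j+p)$ for $m\le j\le m+|a|$), and this single fact simultaneously guarantees the seam condition \eqref{eq:wang} and that $t'$ uses only tiles of $P$.
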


\begin{proof}
Assumiamo che $t:\Z^2\to P$ sia periodica di periodo $(a,b)\in\Z^2\smallsetminus\{(0,0)\}$. Senza perdere generalità, possiamo assumere che $a,b\geq 0$.
Per semplicità di notazione, iniziamo con il caso $a=0$ e $b>0$.

Notiamo che ogni striscia verticale della tassellazione è periodica con lo stesso periodo $b$.
Siccome l'insieme $C$ di colori è finito, anche l'insieme $P$ di prototasselli è finito ($P\subseteq C^4$), e con essi è possibile costruire solo un numero finito di strisce verticali periodiche di periodo $b$, siano esse $S_1,S_2,\ldots,S_n$ (il loro numero è al più $|P|^{b}$). Concretamente, ciascuna striscia $S_k$ è una funzione $t(i_k,\,.\,):\Z\to P$ soddisfacente la seconda condizione in \eqref{eq:wang}.

Associamo ad ogni striscia un vertice di un grafo, e ad ogni coppia $(S_h,S_k)$ un'arco se e solo se attaccando $S_k$ alla destra di $S_h$ le regole di corrispondenza vengono rispettate ($t(i_h,j)_e=t(i_k,j)_o$). Ad ogni cammino doppiamente infinito in questo grafo corrisponde allora una tassellazione di Wang, e ad ogni cammino periodico corrisponde una tassellazione di Wang bi-periodica.
Siccome esiste almeno un cammino doppiamente infinito, corrispondente alla tassellazione $t$ di partenza, deve esisterne anche uno periodico, il che conclude la dimostrazione quando $a=0$.

Nel caso in cui il periodo sia dato da un vettore $(a,b)\in\Z^2$ con $a>0$ e $b=0$, si può ripetere la stessa dimostrazione con strisce orizzontali invece di verticali. Nel caso in cui sia $a$ che $b$ sono positivi, invece di strisce si possono considerare delle scale come in Figura~\ref{fig:scala}.

\begin{figure}[H]
\begin{tikzpicture}[scale=0.5,font=\tiny]

\foreach \k in {0,1,2} \fill[gray!20] (4*\k,4*\k) rectangle (4*\k+4,4*\k+1);

\draw (-0.5,-0.5) grid (12.5,9.5);

\node at (0.5,0.5) {$1$};
\node at (1.5,0.5) {$2$};
\node at (2.55,0.5) {$\cdots$};
\node at (3.5,0.5) {$a$};
\node at (4.5,0.5) {$1$};
\node at (4.5,1.5) {$2$};
\node at (4.5,2.7) {$\vdots$};
\node at (4.5,3.5) {$b$};

\node at (4.5,4.5) {$1$};
\node at (5.5,4.5) {$2$};
\node at (6.55,4.5) {$\cdots$};
\node at (7.5,4.5) {$a$};
\node at (8.5,4.5) {$1$};
\node at (8.5,5.5) {$2$};
\node at (8.5,6.7) {$\vdots$};
\node at (8.5,7.5) {$b$};

\node at (8.5,8.5) {$1$};
\node at (9.5,8.5) {$2$};
\node at (10.55,8.5) {$\cdots$};
\node at (11.5,8.5) {$a$};

\end{tikzpicture}
\caption{}\label{fig:scala}
\end{figure}

La dimostrazione resta pressoché invariata: ciascuna scala è periodica;
esiste al più un numero finito di scale di questo tipo (al massimo $|P|^{ab}$);
trasformando le scale nei vertici di un grafo, ogni cammino doppiamente infinito corrisponde ad una tassellazione di Wang, ed ogni cammino periodico corrisponde ad una tassellazione bi-periodica.
\end{proof}

Dato un insieme $P$ di prototasselli di Wang, il principale problema è naturalmente capire se esiste una tassellazione di Wang con protoinsieme $P$.
Il \emph{teorema di estensione} garantisce l'esistenza di tassellazioni di Wang con protoinsieme $P$ se (e solo se) è possibile tassellare con lo stesso protoinsieme quadrati arbitrariamente grandi (si veda ad esempio \cite[Sect.~2.5.3]{DanPen} per la dimostrazione nel contesto generale delle tassellazioni del piano). Tale affermazione non è affatto ovvia, e la dimostrazione del teorema non è costruttiva (usa il fatto che i tasselli sono compatti) e non dà una indicazione concreta su come trovare esplicitamente una tassellazione del piano.

Sarebbe bello poter scrivere un programma al computer che prendesse come input un insieme $P$ di prototasselli e rispondesse in un tempo finito alla domanda ``\textit{esiste una tassellazione di Wang con protoinsieme $P$?}'' 
Il problema di esistenza di un tale programma è stato posto da H.~Wang in \cite{Wan61},
e questo tipo di domande è in effetti proprio la motivazione originale per la costruzione di questi tasselli.

Un algoritmo potrebbe essere il seguente. Per ogni $n\in\{1,2,3,\ldots\}$ consideriamo il quadrato $[0,n]^2$. Siccome per ogni $P$ esiste solo un numero finito di funzioni $t:[0,n]^2\to P$, possiamo controllarle una ad una e vedere se ne esiste almeno una soddisfacente \eqref{eq:wang} nel suo dominio di definizione. Il programma si arresta se per un qualche $n$ scopriamo che nessuna funzione $t:[0,n]^2\to P$ soddisfa \eqref{eq:wang}, e risponde (come conseguenza del teorema di estensione) in modo negativo:
\begin{itemize}
\item[]
\small\texttt{non esiste alcuna tassellazione di Wang \\ con protoinsieme $P$.}
\end{itemize}
Naturalmente, se esiste una tassellazione di Wang, l'algoritmo continuerà a testare funzioni per $n$ sempre più grande senza mai arrestarsi.

Non è difficile ideare anche un algoritmo per testare l'esistenza di tassellazioni periodiche.
Ordiniamo coppie $(m,n)$ di interi positivi ad esempio in modo lessicografico.
Per ciascuna coppia, consideriamo tutte le funzioni $t:\{0,\ldots,m\}\times\{0,\ldots,n\}$ soddisfacenti \eqref{eq:wang} nel loro dominio di definizione. Se il programma ne trova una che in aggiunta soddisfi $t(m,j)_e=t(0,j)_o$ e $t(i,n)_n=t(i,0)_s$
per ogni $0\leq i\leq m$ e $0\leq j\leq n$, si arresta e risponde:
\begin{itemize}
\item[]
\small\texttt{esiste una tassellazione di Wang bi-periodica \\ con protoinsieme $P$.}
\end{itemize}
Tale tassellazione bi-periodica $\overline{t}$ si può costruire a partire dalla funzione $t$ trovata ponendo $\overline{t}(i+hm,j+kn):=t(i,j)$ per ogni $0\leq i\leq m$, $0\leq j\leq n$ e per ogni $h,k\in\Z$.
Siccome l'esistenza di una tassellazione periodica implica quella di una tassellazione bi-periodica, l'unico caso in cui il programma continua all'infinito è se non esistono tassellazioni periodiche con protoinsieme $P$.

Combinando questi due algoritmi si può ottenere un programma che, prendendo come input un insieme $P$ di tasselli di Wang, risponda alla nostra domanda in un tempo finito se non esiste alcuna tassellazione di Wang con protoinsieme $P$, o se ne esiste almeno una periodica.
Abbiamo quasi risolto il problema che ci siamo posti in partenza: l'unico caso in cui il nostro programma non si arresta è se gli diamo come input un protoinsieme $P$ per il quale esistono tassellazioni di Wang e sono tutte aperiodiche. Diremo che tale insieme è \emph{aperiodico}.

L'esistenza di un protoinsieme aperiodico è stata provata da R.~Berger, uno studente di H.~Wang, nella sua tesi di dottorato, assieme alla dimostrazione che il problema di Wang è indecidibile \cite{Ber66}. Il primo protoinsieme aperiodico scoperto da Berger era formato da oltre 20 mila tasselli, ma nella tesi di dottorato il numero di tasselli necessari si era già ridotto a 104. Recentemente, E.~Jeandel ed M.~Rao in \cite{JR21} hanno costruito un protoinsieme aperiodico formato da 11 tasselli di Wang e usando solamente 5 colori (4 se non contiamo il bianco), e dimostrato che tale numero è minimale (non esiste protoinsieme aperiodico di tasselli di Wang con meno di 11 tasselli o meno di 5 colori).
La dimostrazione in \cite{JR21} fa un uso estensivo del computer (come il celebre teorema dei quattro colori).

Qui proponiamo l'esempio scoperto da J.~Kari \cite{Kar96}, la cui dimostrazione è estremamente elegante e non richiede l'uso di un computer.
Il protoinsieme di Kari è formato dai seguenti 14 tasselli:
\begin{gather}
\includegraphics[page=1,scale=0.75,valign=c]{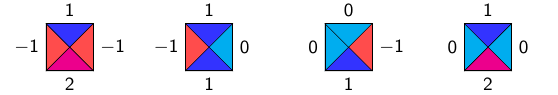} \label{eq:kari1} \\
\includegraphics[page=2,scale=0.75,valign=c]{wang.pdf} \label{eq:kari2} \\
\includegraphics[page=3,scale=0.75,valign=c]{wang.pdf} \label{eq:kari3}
\end{gather}
In questa figura vediamo 8 colori, ciascuno associato ad un numero. Celeste ed arancione sono indicati con $0$ e $0'$: entrambi valgono $0$ ai fini aritmetici, ma l'apice ci ricorda che sono colori differenti e non possono essere adiacenti. Siccome però i tasselli non possono essere ruotati ma solo traslati, i colori verticali si possono scegliere indipendentemente da quelli orizzontali ed è possibile ricolorare i tasselli usando solo 6 colori.

Vogliamo dimostrare che il protoinsieme di Kari è aperiodico. Dobbiamo provare che
esiste almeno una tassellazione di Wang con tale protoinsieme (Prop.~\ref{prop:kari2}),
e che ogni tassellazione di Wang con tale protoinsieme è aperiodica (Prop.~\ref{prop:kari1}).

Per iniziare, cerchiamo di capire da dove vengono questi tasselli. 
Dato un numero reale positivo $\alpha$ ed un numero razionale positivo $r$, indichiamo con $A(r,\alpha)$ e $B(\alpha)$ le due successioni con elementi:
\begin{align*}
A_k(r,\alpha) &:=r\lfloor k\alpha \rfloor- \lfloor rk\alpha \rfloor \;,
\\
B_k(\alpha) &:=\lfloor k\alpha \rfloor- \lfloor (k-1)\alpha \rfloor \;,
\end{align*}
per ogni $k\in\Z$. In queste formule, $\lfloor x\rfloor:=\max\{n\in\Z\mid n\leq x\}$ è la parte intera di $x$.

Non è difficile mostrare che entrambe le successioni assumono solo un numero finito di valori.
Siccome $k\alpha-1<\lfloor k\alpha \rfloor\leq k\alpha$, si ha
\[
\alpha-1<B_k(\alpha)<\alpha+1 .
\]
Ma essendo $B_k(\alpha)$ intero, deve essere $B_k(\alpha)\in \big\{ \lfloor \alpha \rfloor,\lfloor \alpha \rfloor+1\big\}$. In particolare, $B(\alpha)\in\{0,1\}^{\Z}$ se $\alpha\in [0,1]$ e $B(\alpha)\in\{0,1,2\}^{\Z}$ se $\alpha\in [0,2]$ ($B_k(\alpha)=\alpha$ se $\alpha$ è intero).

Posto $x:=k\alpha$, siccome
\[
r\lfloor x \rfloor-1\leq rx-1<\lfloor rx \rfloor\leq rx< r(\lfloor x \rfloor+1)
\]
si ha
\[
-r<A_k(r,\alpha)<1 .
\]
Scrivendo $r=p/q$ con $p,q$ interi positivi, siccome $A_k(r,\alpha)$ è un multiplo intero di $1/q$, si ha
\[
A_k(r,\alpha)\in\left\{
-r+\frac{1}{q},-r+\frac{2}{q},\ldots,1-\frac{1}{q}
\right\} .
\]
Definiamo ora due insiemi di tasselli di Wang, $T_2$ e $T_{2/3}$, come segue.

\begin{df}
Per $r\in\{2,2/3\}$, $T_r$ è l'insieme di tutti i tasselli della forma
\begin{equation}\label{eq:task}
\begin{tikzpicture}[thick,font=\footnotesize,baseline=(current bounding box.center)]

\draw[very thin] (0,0) -- (1,1) (0,1) -- (1,0);
\draw (0,0) rectangle (1,1);

\node[above] at (0.5,1) {$B_k(\alpha)$};
\node[below] at (0.5,0) {$B_k(r\alpha)$};
\node[left] at (0,0.5) {$A_{k-1}(r,\alpha)$};
\node[right] at (1,0.5) {$A_k(r,\alpha)$};

\end{tikzpicture}
\end{equation}
con $k\in\Z$ ed $\alpha$ che varia in $[1/2,1]$ se $r=2$, ed in $(1,2]$ se $r=2/3$.
\end{df}

Notiamo che ogni tassello dell'insieme $T_r$ è del tipo
\begin{center}
\begin{tikzpicture}[thick,font=\footnotesize]

\draw[very thin] (0,0) -- (1,1) (0,1) -- (1,0);
\draw (0,0) rectangle (1,1);

\node[above] at (0.5,1) {$a$};
\node[below] at (0.5,0) {$c$};
\node[left] at (0,0.5) {$b$};
\node[right] at (1,0.5) {$d$};

\end{tikzpicture}
\end{center}
con $a,b,c,d$ interi soddisfacenti
\begin{equation}\label{eq:divisione}
ra+b=c+d .
\end{equation}
Possiamo allora elencare tutti i tasselli di questi due insiemi.

Un tassello in $T_2$ ha $b,d\in\{-1,0\}$, $a\in\{0,1\}$ (siccome $\alpha\in [0,1]$)
e $c\in\{1,2\}$ (siccome $2\alpha\in [1,2]$). Esistono solo quattro tasselli di questo tipo,
soddisfacenti \eqref{eq:divisione} con $r=2$, e sono i quattro tasselli in \eqref{eq:kari1}.

Un tassello in $T_{2/3}$ ha $b,d\in\{-\frac{1}{3},0,\frac{1}{3},\frac{2}{3}\}$, $a\in\{1,2\}$ (siccome $\alpha\in [1,2]$) e $c\in\{0,1,2\}$ (siccome $\frac{2}{3}\alpha\in [0,1]$). Esistono 
dieci tasselli di questo tipo soddisfacenti \eqref{eq:divisione} con $r=2/3$, e sono esattamente tasselli in \eqref{eq:kari2} ed in \eqref{eq:kari3}.

La distinzione fra $0$ e $0'$ --- la colorazione diversa dei lati corrispondenti --- fa sì che ogni riga orizzontale di una tassellazione di Wang con questi tasselli sia formata solo da elementi di $T_2$, oppure solo da elementi di $T_{2/3}$.

\begin{prop}\label{prop:kari2}
Esiste una tassellazione di Wang con tasselli (\ref{eq:kari1}-\ref{eq:kari3}).
\end{prop}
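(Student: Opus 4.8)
The plan is to realise the tiling from a single bi-infinite orbit of a piecewise-linear expansion map, exploiting the arithmetic meaning of the tiles encoded by \eqref{eq:divisione}. Concretely, I would build a function $t:\Z^2\to P$ in which the whole row at height $j$ is taken from a single set $T_{r_j}$ (with $r_j\in\{2,2/3\}$) for one common real parameter $\beta_j$, and the tile in column $i$ is the one with Beatty index $k=i$: that is, $t(i,j)$ is the tile \eqref{eq:task} with $\alpha=\beta_j$, $r=r_j$ and $k=i$.

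First I would check that this choice makes the horizontal matching automatic. The east label of $t(i,j)$ is $A_i(r_j,\beta_j)$ and the west label of $t(i+1,j)$ is $A_{(i+1)-1}(r_j,\beta_j)=A_i(r_j,\beta_j)$, so the east--west half of \eqref{eq:wang} holds for free, whatever $\beta_j$ and $r_j$ are. Hence the only genuine constraints are the vertical matchings together with the requirement that the prescribed tile actually belong to $T_{r_j}$.

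Next I would turn the vertical matching into a dynamical condition. The north label of $t(i,j)$ is $B_i(\beta_j)$ and the south label of $t(i,j+1)$ is $B_i(r_{j+1}\beta_{j+1})$; these agree for every $i$ as soon as $\beta_j=r_{j+1}\beta_{j+1}$. Setting $f(\alpha):=2\alpha$ for $\alpha\in[1/2,1]$ and $f(\alpha):=\tfrac23\alpha$ for $\alpha\in(1,2]$ — so that using a tile of $T_2$ forces $\beta_j\in[1/2,1]$ and using one of $T_{2/3}$ forces $\beta_j\in(1,2]$, exactly the ranges in the definition of $T_r$ — the matching condition reads $\beta_{j-1}=f(\beta_j)$. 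Thus a valid tiling is the same thing as a bi-infinite orbit $(\beta_j)_{j\in\Z}$ of $f$.

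It remains to produce such a bi-infinite orbit, and this is the step I expect to be the crux. A short computation shows that $f$ sends $[1/2,2]$ into $J:=(2/3,2]$ and that $f$ restricted to $J$ is a surjection of $J$ onto itself. Surjectivity lets me extend the orbit in the direction that needs preimages (picking one in $J$ at each step), while ordinary iteration extends it in the other direction; starting from any $\beta_0\in J$ this yields $(\beta_j)_{j\in\Z}\subset J\subset[1/2,2]$, so every $\beta_j$ lands in a range where the corresponding tile exists, and the resulting $t$ satisfies \eqref{eq:wang} everywhere. The subtlety worth flagging is that $f$ has no periodic orbit whatsoever — $2^a(\tfrac23)^b=1$ forces $a=b=0$ since $2$ and $3$ are multiplicatively independent — so one cannot get away with a constant or periodic choice of rows; a genuinely infinite, non-repeating orbit is required, which is precisely the mechanism that will force aperiodicity in Prop.~\ref{prop:kari1}. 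Alternatively, one may bypass the backward extension altogether: forward iteration of $f$ from an arbitrary starting value tiles horizontal strips of any finite height, hence arbitrarily large squares, and the extension theorem quoted above then delivers a tiling of the whole plane.
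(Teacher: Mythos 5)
Your proof is correct and follows essentially the same route as the paper's: rows of tiles all drawn from a single $T_{r_j}$ and governed by one real parameter $\beta_j$, horizontal matching automatic because the vertical labels $A_{k-1},A_k$ are consecutive, and vertical matching reduced to the orbit condition $\beta_{j-1}=f(\beta_j)$ for the same piecewise-linear map $f$. The only cosmetic difference is that the paper extends the orbit upwards via an explicit right inverse $g$ of $f$ on $[\tfrac{2}{3},2]$, whereas you invoke the surjectivity of $f$ on that interval abstractly; the mechanism is identical.
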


\begin{proof}
Sia $f:[\frac{1}{2},2]\to [\frac{1}{2},2]$ la funzione
\[
f(x) :=\begin{cases}
2x & \text{se }x\in[\frac{1}{2},1] ,\\
\frac{2}{3}x & \text{se }x\in (1,2] .
\end{cases}
\]
Per $\alpha\in [\frac{1}{2},2]$, siccome i numeri sui lati verticali del tassello \eqref{eq:task} sono consecutivi, possiamo costruire una riga orizzontale con questi tasselli che sopra ha gli elementi della successione $B(\alpha)$ e sotto gli elementi della successione $B(r\alpha)$ dove $r=f(\alpha)$. 
Indichiamo tale riga con $R(\alpha)$.

Sia $g:[\frac{2}{3},2]\to [\frac{2}{3},2]$ la funzione
\[
g(x) :=\begin{cases}
\frac{1}{2}x & \text{se }x\in[\frac{2}{3},\frac{4}{3}] ,\\
\frac{3}{2}x & \text{se }x\in (\frac{4}{3},2] .
\end{cases} \\
\]
Per ogni $\alpha\in [\frac{2}{3},2]$, sopra alla riga $R(\alpha)$ possiamo attaccare la riga $R(g(\alpha))$, e sotto possiamo attaccare la riga $R(f(\alpha))$.

Scelto un qualsiasi numero $\alpha_0\in[\frac{2}{3},2]$. Definiamo ricorsivamente
$\alpha_n=g(\alpha_{n-1})$ e $\alpha_{-n}=f(\alpha_{-n+1})$ per $n$ intero positivo.
Partendo dalla riga $R(\alpha_0)$, possiamo attaccarle sopra $R(\alpha_1)$, $R(\alpha_2)$, etc.\
ed attaccarle sotto $R(\alpha_{-1})$, $R(\alpha_{-2})$, etc.\ e ricoprire in questo modo il piano ottenendo una tassellazione di Wang.
\end{proof}

\begin{prop}\label{prop:kari1}
Ogni tassellazione di Wang con tasselli (\ref{eq:kari1}-\ref{eq:kari3}) è aperiodica.
\end{prop}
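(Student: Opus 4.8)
The plan is to argue by contradiction, reducing aperiodicity to the non-existence of a bi-periodic tiling. Suppose some tiling $t\colon\Z^2\to P$ with the tiles (\ref{eq:kari1}--\ref{eq:kari3}) were periodic. By Teorema~\ref{thm:wang} there would then exist a \emph{bi-periodic} tiling with the same protoinsieme, say with horizontal period $N$ and vertical period $M$, both positive. I would derive a contradiction purely from the arithmetic relation \eqref{eq:divisione} obeyed by every Kari tile, together with the matching conditions \eqref{eq:wang}.

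First I would extract a one-dimensional invariant from each horizontal row. Recall that, thanks to the two shades of $0$, every row $j$ consists entirely of tiles of $T_2$ or entirely of tiles of $T_{2/3}$; call the corresponding multiplier $r_j\in\{2,\tfrac23\}$. Writing $a_i,c_i,b_i,d_i$ for the north, south, west, east labels of $t(i,j)$, each tile in row $j$ satisfies $r_j a_i+b_i=c_i+d_i$ by \eqref{eq:divisione}. Summing this over one horizontal period $i=1,\dots,N$ and using $d_i=b_{i+1}$ (the east--west matching in \eqref{eq:wang}) together with $b_{N+1}=b_1$ (horizontal periodicity), the west and east contributions telescope and cancel, leaving $r_j\sum_{i=1}^N a_i=\sum_{i=1}^N c_i$. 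Introducing the average of the south labels $\delta_j:=\frac1N\sum_{i=1}^N t(i,j)_s$ and noting that the north labels of row $j$ are exactly the south labels of row $j+1$ (north--south matching), this collapses to the clean recursion $\delta_j=r_j\,\delta_{j+1}$.

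Next I would verify that $\delta_j\neq 0$ for every $j$, so that the recursion is genuinely multiplicative. All north and south labels are nonnegative ($a\in\{0,1,2\}$, $c\in\{0,1,2\}$), so $\delta_j=0$ would force every south label of row $j$ to vanish; since $\delta_{j+1}=\delta_j/r_j=0$ as well, every south label of row $j+1$, i.e.\ every north label of row $j$, would vanish too. But a tile with $a=c=0$ lies in neither $T_2$ (where $c\in\{1,2\}$) nor $T_{2/3}$ (where $a\in\{1,2\}$), a contradiction. Hence $\delta_j>0$ for all $j$.

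Finally I would iterate over one vertical period. From $\delta_j=r_j\delta_{j+1}$ and $\delta_M=\delta_0$ I obtain $\delta_0=\bigl(\prod_{j=0}^{M-1}r_j\bigr)\delta_0$, and since $\delta_0>0$ this yields $\prod_{j=0}^{M-1}r_j=1$. Writing $p$ and $q$ for the number of rows with $r_j=2$ and $r_j=\tfrac23$ respectively ($p+q=M\geq 1$), the product equals $2^{p+q}/3^{q}=2^{M}/3^{q}$, so $2^{M}=3^{q}$, which is impossible by unique factorization unless $M=0$. This contradiction rules out any bi-periodic tiling, hence by Teorema~\ref{thm:wang} any periodic one, proving that every tiling with Kari's tiles is aperiodic. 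I expect the main obstacle to be the middle steps: arranging the summed quantity so that the horizontal labels telescope exactly, and confirming that the density is never zero; once the recursion $\delta_j=r_j\delta_{j+1}$ is secured, the number-theoretic contradiction is immediate.
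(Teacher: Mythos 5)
Your proof is correct and follows essentially the same route as the paper's: reduce to a bi-periodic tiling via Teorema~\ref{thm:wang}, sum \eqref{eq:divisione} over one horizontal period so that the east--west labels telescope, obtain the multiplicative recursion between consecutive rows, and conclude with the impossibility of $\prod_j r_j=1$ for $r_j\in\{2,\tfrac23\}$. The only minor (and arguably cleaner) difference is how the invariant is shown to be nonzero: the paper appeals to the fact that two tiles with $0$ on the north edge cannot be adjacent, whereas you propagate $\delta_j=0$ to two consecutive rows and note that no Kari tile has both north and south labels equal to $0$.
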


\begin{proof}
Dimostriamo la proposizione per assurdo. Per il Teorema \ref{thm:wang}, senza perdere generalità assumiamo che esista una tassellazione di Wang $t:\Z^2\to T_2\cup T_{2/3}$ bi-periodica con periodi $a,b>0$. Per $j\in\Z$ indichiamo con $q_j$ la somma dei colori sul lato nord dei quadrati
$t(1,j),\ldots,t(a,j)$. Quindi:
\[
q_j:=t(1,j)_n+t(2,j)_n+\ldots+t(a,j)_n .
\]
Siccome $t(i+a,j)=t(i,j)$, la somma dei colori sul lato ovest è uguale alla somma dei colori sul lato est.
Siccome i tasselli sono tutti dell'insieme $T_2$ o $T_{2/3}$, da \eqref{eq:divisione}
ricaviamo
\[
r_iq_i=q_{i+1} ,
\]
dove $r_i=2$ se i tasselli sono presi dall'insieme $T_2$, e $r_i=2/3$ se sono presi dall'insieme $T_{2/3}$. Siccome la tassellazione è periodica in direzione verticale di periodo $b$, si ha
\[
q_0=q_b=r_{b-1}\cdots r_1r_0\cdot q_0 .
\]
Abbiamo raggiunto un assurdo.
Siccome due tasselli con $0$ sul lato nord non possono essere adiacenti, si ha $q_0\neq 0$.
Ma $r_{b-1}\cdots r_1r_0$ non può essere uguale ad $1$, perché i fattori sono $2$ e $2/3$.
\end{proof}

\begin{cor}
L'insieme di tasselli (\ref{eq:kari1}-\ref{eq:kari3}) è aperiodico.
\end{cor}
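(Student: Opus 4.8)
Il piano è semplicemente di combinare i due risultati appena dimostrati. Ricordiamo che, per definizione, un protoinsieme $P$ è detto \emph{aperiodico} quando sono soddisfatte due condizioni: esiste almeno una tassellazione di Wang con protoinsieme $P$, e ogni tassellazione di Wang con protoinsieme $P$ è aperiodica. Questi sono precisamente gli enunciati delle Proposizioni~\ref{prop:kari2} e~\ref{prop:kari1}, per cui il corollario non richiede alcun argomento nuovo.

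Per prima cosa invoco la Proposizione~\ref{prop:kari2}, che garantisce l'esistenza: partendo da un qualsiasi $\alpha_0\in[\frac{2}{3},2]$ e iterando le mappe $f$ e $g$ si costruisce esplicitamente una famiglia di righe $R(\alpha_n)$ che si incastrano verticalmente e ricoprono il piano. Dunque la prima condizione della definizione è verificata.

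In secondo luogo invoco la Proposizione~\ref{prop:kari1}, che assicura che ogni tale tassellazione è aperiodica, verificando così la seconda condizione. Questo è il cuore dell'argomento di Kari: tramite il Teorema~\ref{thm:wang} ci si riduce al caso bi-periodico, e sommando i colori sul lato nord di una riga si ricava la relazione moltiplicativa $r_iq_i=q_{i+1}$; la periodicità verticale forzerebbe il prodotto $r_{b-1}\cdots r_1r_0$ ad essere $1$, impossibile poiché i fattori valgono solo $2$ o $2/3$ mentre $q_0\neq 0$.

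Mettendo insieme le due proposizioni, entrambe le condizioni della definizione sono soddisfatte e il corollario segue immediatamente. Non mi aspetto quindi alcun ostacolo tecnico nel corollario stesso: tutta la difficoltà è già stata concentrata nelle due proposizioni precedenti, e in particolare nell'idea moltiplicativa alla base della Proposizione~\ref{prop:kari1}, che è ciò che rende l'esempio di Kari dimostrabile senza ricorso al calcolatore.
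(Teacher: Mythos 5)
La tua dimostrazione è corretta e coincide con quella (implicita) del testo: il corollario è presentato proprio come combinazione immediata delle Proposizioni~\ref{prop:kari2} e~\ref{prop:kari1}, che insieme verificano le due condizioni della definizione di protoinsieme aperiodico. Non c'è alcuna differenza sostanziale di approccio.
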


L'interesse in logica matematica ed in informatica per le tassellazioni di Wang viene dal fatto che è possibile trovare un protoinsieme di Wang che simuli il comportamento di qualsiasi \emph{macchina di Turing} \cite{Wan75}. Vari esempi concreti si possono trovare in \cite{GS87}.

\section{Le tassellazioni di Penrose}\label{sec:pen1}

Una delle domande poste da Keplero nel suo \textit{Harmonices mundi} era se fosse possibile tassellare il piano usando solo tasselli con simmetria 5-fold. Sappiamo che non è possibile farlo usando un singolo prototassello \cite{DGS82}, ad esempio un pentagono regolare o un pentagramma. Ma a parte questo, il problema è tutt'ora aperto.
Nel 1973, cercando una soluzione al problema posto da Keplero, Sir.~Roger Penrose (Premio Nobel per la fisica nel 2020) scoprì un protoinsieme aperiodico formato da soli sei prototasselli: tre pentagoni regolari (con diverse regole di corrispondenza), un pentagramma, un rombo ed una ``barchetta'' \cite{Pen74}. Un anno dopo, ridusse il numero di prototasselli a due soli, i suoi celebri dardo ed aquilone illustrati in \eqref{eq:dardoaq}.
Questi due protoinsiemi, ed un terzo protoinsieme formato da due rombi, sono descritti in \cite{Pen78}.

Noi inizieremo con dei tasselli un po' diversi, introdotti da Robinson per studiare le tassellazioni di Penrose \cite{Rob75}.
Questi tasselli si ottengono sezionando un pentagono regolare di lato unitario, come in figura:
\begin{center}
\begin{tikzpicture}[semithick,font=\footnotesize,inner sep=0.5pt]

\coordinate (v0) at (0,0);
\coordinate (v1) at ($(v0)+(36:1.5)$);
\coordinate (v2) at ($(v1)+(108:1.5)$);
\coordinate (v3) at ($(v2)+(180:1.5)$);
\coordinate (v4) at ($(v3)+(252:1.5)$);

\draw (v4) -- (v1) -- (v2) -- (v3) -- (v4) -- (v2);
\draw[dashed] (v1) -- (v0) -- (v4);

\foreach \k in {0,...,4} \fill (v\k) circle (0.03);

\node[above] at (-0.75,0.9) {1};
\node[above] at (1,0.9) {2};
\node at (0.65,2) {2};
\node at (-0.95,1.3) {1};
\node[above] at (0.25,2) {1};
\node[above] at (-0.65,1.98) {3};

\end{tikzpicture}
\end{center}
I numeri in figura indicano l'ampiezza di ciascun angolo, in multipli di $\pi/10$. Otteniamo due triangoli
\begin{center}
\begin{tikzpicture}[font=\small,baseline=(current bounding box.center)]

\coordinate (v1) at (0,0);
\coordinate (v2) at ($(v1)+(1.5,0)$);
\coordinate (v3) at ($(v1)+(72:1.5*\golden)$);

\draw[semithick] (v1) -- (v2) -- (v3) -- cycle;

\foreach \k in {1,2,3} \fill (v\k) circle (0.03);

\begin{scope}[font=\footnotesize,inner sep=0pt]
\node[shift={(36:0.3)}] at (v1) {$2$};
\node[shift={(144:0.3)}] at (v2) {$2$};
\node[shift={(-90:0.5)}] at (v3) {$1$};

\draw [gray,decorate,decoration = {brace,mirror,amplitude=7pt,raise=5pt}] (v1) -- (v2) node[midway,below=16pt] {$1$};

\draw [gray,decorate,decoration = {brace,amplitude=7pt,raise=5pt}] (v1) -- (v3) node[midway,left=15pt,yshift=5pt] {$\varphi$};

\end{scope}

\end{tikzpicture}\qquad\qquad
\begin{tikzpicture}[font=\small,baseline=(current bounding box.center)]

\coordinate (v4) at (0,0);
\coordinate (v5) at ($(v4)+(1.5*\golden,0)$);
\coordinate (v6) at ($(v4)+(36:1.5)$);

\draw[semithick] (v4) -- (v5) -- (v6) -- cycle;

\foreach \k in {4,5,6} \fill (v\k) circle (0.03);

\begin{scope}[font=\footnotesize,inner sep=0pt]
\node[shift={(18:0.55)}] at (v4) {$1$};
\node[shift={(162:0.55)}] at (v5) {$1$};
\node[shift={(-90:0.2)}] at (v6) {$3$};

\draw [gray,decorate,decoration = {brace,mirror,amplitude=7pt,raise=5pt}] (v4) -- (v5) node[midway,below=16pt] {$\varphi$};

\draw [gray,decorate,decoration = {brace,amplitude=7pt,raise=5pt}] (v4) -- (v6) node[midway,above=12pt,xshift=-10pt] {$1$};

\end{scope}

\end{tikzpicture}
\end{center}

\pagebreak

\end{multicols}

\begin{figure}[!t]

\vspace*{3cm}

\includegraphics[width=0.8\textwidth]{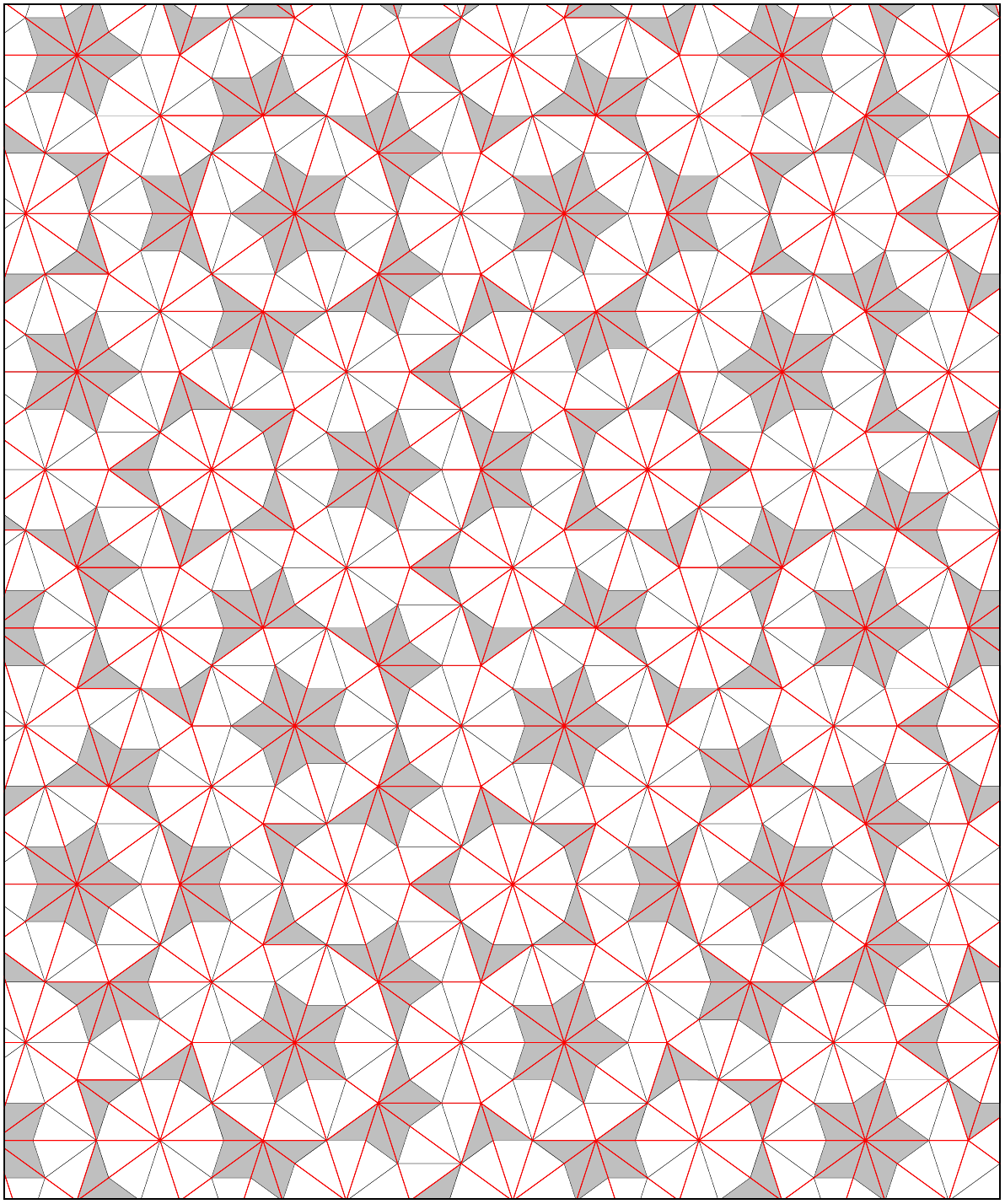}

\caption{Una tassellazione di Penrose con triangoli}\label{fig:penrose}
\end{figure}

\begin{multicols}{2}

~\pagebreak

\noindent
I lati corti hanno lunghezza unitaria, mentre i lunghi hanno lunghezza pari alla \emph{sezione aurea}
\[
\varphi:=\frac{1+\sqrt{5}}{2} .
\]
Un triangolo isoscele in cui il rapporto dei lati è la sezione aurea è detto \emph{triangolo aureo}. Ogni triangolo aureo è simile ad uno dei due in figura. Se l'angolo opposto alla base è acuto, parleremo di \emph{acutaureo}; se è ottuso, parleremo di \emph{ottusaureo}.

Per ottenere delle tassellazioni interessanti ci mancano delle regole di corrispondenza. Decoriamo i lati con delle frecce come segue:
\begin{equation}\label{eq:Rtriangles}
\begin{tikzpicture}[baseline=(current bounding box.center),font=\small,>=Straight Barb]

\coordinate (a) at (0,0);
\coordinate (b1) at (252:1.5*\golden);
\coordinate (b2) at (-72:1.5*\golden);

\fill[TA] (a) -- (b1) -- (b2) -- cycle;

\begin{scope}[decoration={
    markings,
    mark=at position 0.5 with {\arrow[scale=1.1,xshift=1.5pt]{>}}}
    ]
\draw[postaction={decorate}] (b1) -- (a);
\end{scope}

\begin{scope}[decoration={
    markings,
    mark=at position 0.5 with {\arrow[scale=1.1,xshift=3pt]{>>}}}
    ]
\draw[postaction={decorate}] (a) -- (b2);
\draw[postaction={decorate}] (b2) -- (b1);
\end{scope}


\end{tikzpicture} \qquad\qquad
\begin{tikzpicture}[baseline={(current bounding box.center)},font=\small,>=Straight Barb]

\coordinate (a) at (0,0);
\coordinate (b1) at (216:1.5);
\coordinate (b2) at (-36:1.5);

\fill[SA] (a) -- (b1) -- (b2) -- cycle;

\begin{scope}[decoration={
    markings,
    mark=at position 0.5 with {\arrow[scale=1.1,xshift=1.5pt]{>}}}
    ]
\draw[postaction={decorate}] (a) --  (b1);
\end{scope}

\begin{scope}[decoration={
    markings,
    mark=at position 0.5 with {\arrow[scale=1.1,xshift=3pt]{>>}}}
    ]
\draw[postaction={decorate}] (b2) -- (b1);
\draw[postaction={decorate}] (a) -- (b2);\end{scope}


\end{tikzpicture}
\end{equation}

Ci interessa studiare tassellazioni del piano con protoinsieme \eqref{eq:Rtriangles}, edge-to-edge, e con la regola che due lati possono essere uniti solo se hanno la stessa freccia (singola o doppia) e che punta nella stessa direzione. Questi due triangoli possono essere traslati, \emph{ruotati} (al contrario di quanto accadeva con le tassellazioni di Wang) e anche \emph{riflessi}. Abbiamo quindi un protoinsieme di due o quattro prototasselli, a seconda che contiamo le immagini riflesse dei triangoli come prototasselli distinti.

Ora che abbiamo questo protoinsieme, possiamo costruire le prime tassellazioni. Una porzione di tassellazione del piano è in Figura~\ref{fig:penrose} (le frecce sui lati dei triangoli sono omesse per non appesantire la figura). Un osservatore attento noterà che, sebbene ad occhio non si veda alcuna simmetria in questa tassellazione, ci sono dei motivi di base ripetuti nella figura. Notiamo vari tipi di ``stelle'', delle ``ruote'' formate da dieci acutaurei disposti a raggiera, etc.

Lo tassellazioni di Penrose possono essere ottenute in tanti modi equivalenti, usando diversi tipi di prototasselli. Per passare da una descrizione all'altra, ed in generale per studiare le proprietà di queste tassellazioni, utilizziamo alcune operazioni di ``taglio e cucito'' che descriveremo ora nel seguito in dettaglio.

Per cominciare, notiamo che i triangoli in \eqref{eq:Rtriangles} possono essere combinati (rispettando le regole di corrispondenza) a formare l'aquilone ed il dardo in \eqref{eq:dardoaq}:
\begin{equation}\label{eq:KD}
\begin{tikzpicture}[baseline=(current bounding box.center),font=\small,>=Straight Barb]

\clip (-1.5,-0.2) rectangle (1.5,2.3);

\coordinate (a4) at (0,0);
\coordinate (a2) at (162:\raggio);
\coordinate (a3) at (18:\raggio);
\path[name path=PB] (a2)-- ++(54:3);
\path[name path=PA] (a3)-- ++(126:3);
\path[name intersections={of=PA and PB,by=a1}];

\fill[GTB] (a1) -- (a2) -- (a4) -- cycle;
\fill[GTA] (a1) -- (a3) -- (a4) -- cycle;

\begin{scope}[decoration={
    markings,
    mark=at position 0.5 with {\arrow[scale=1.1,xshift=1.5pt]{>}}}
    ]
\draw[dashed,postaction={decorate}] (a4) -- (a1);
\end{scope}

\begin{scope}[decoration={
    markings,
    mark=at position 0.5 with {\arrow[scale=1.1,xshift=3pt]{>>}}}
    ]
\draw[postaction={decorate}] (a1) -- (a2);
\draw[postaction={decorate}] (a1) -- (a3);
\draw[postaction={decorate}] (a2) -- (a4);
\draw[postaction={decorate}] (a3) -- (a4);
\end{scope}

\end{tikzpicture}
\hspace{1cm}
\begin{tikzpicture}[baseline=(current bounding box.center),font=\small,>=Straight Barb]

\coordinate (a1) at (0,0);
\coordinate (a3) at (-18:\raggio);
\coordinate (a2) at (198:\raggio);
\coordinate (a4) at (0,\raggio);

\fill[GSA] (a1) -- (a2) -- (a4) -- cycle;
\fill[GSB] (a1) -- (a3) -- (a4) -- cycle;

\begin{scope}[decoration={
    markings,
    mark=at position 0.5 with {\arrow[scale=1.1,xshift=1.5pt]{>}}}
    ]
\draw[dashed,postaction={decorate}] (a1) -- (a4);
\end{scope}

\begin{scope}[decoration={
    markings,
    mark=at position 0.5 with {\arrow[scale=1.1,xshift=3pt]{>>}}}
    ]
\draw[postaction={decorate}] (a2) -- (a4);
\draw[postaction={decorate}] (a3) -- (a4);
\draw[postaction={decorate}] (a1) -- (a2);
\draw[postaction={decorate}] (a1) -- (a3);
\end{scope}

\end{tikzpicture}
\end{equation}
Per questo motivo, il triangolo celeste in \eqref{eq:Rtriangles} è chiamato \emph{semiaquilone}, mentre quello rosso è detto \emph{semidardo}.
Nella figura \eqref{eq:KD} e in quelle seguenti, usiamo il colore bianco per i triangoli \eqref{eq:Rtriangles} ed il colore grigio per le loro immagini riflesse.

E' chiaro che ogni tassellazione con protoinsieme \eqref{eq:KD} si può trasformare in una con protoinsieme \eqref{eq:Rtriangles} dividendo aquilone e dardo a metà. Viceversa, studiando le regole di corrispondenza non è difficile realizzare che in ogni tassellazione con protoinsieme \eqref{eq:Rtriangles} gli acutaurei sono combinati a coppie a formare tanti aquiloni, e gli ottusaurei sono combinati a coppie a formare tanti dardi. Sì può quindi passare da una tassellazione con triangoli ad una con aquiloni e dardi rimuovendo tutti i lati con freccia singola.
Siccome ogni tassellazione del piano con triangoli si può trasformare in una con aquilone e dardo, e viceversa, diciamo che i due protoinsiemi sono \emph{mutuamente equivalenti}.

Riscalando l'ottusaureo di un fattore $\varphi$ otteniamo un altro protoinsieme, formato da un acutaureo con lati $1$  e $\varphi$, ed un ottusaureo con lati $\varphi$ e $\varphi^2=\varphi+1$. Questi triangoli possono essere combinati con le loro immagini riflesse a formare dei rombi:
\begin{equation}\label{eq:Krombi}
\begin{tikzpicture}[baseline=(current bounding box.center),font=\small,>=Straight Barb,rotate=90]

\coordinate (a4) at (0,0);
\coordinate (a2) at (162:1.618*\raggio);
\coordinate (a3) at (18:1.618*\raggio);
\coordinate (a1) at ($(a2)+(18:1.618*\raggio)$);

\fill[GTA] (a1) -- (a2) -- (a4) -- cycle;
\fill[GTB] (a1) -- (a3) -- (a4) -- cycle;

\begin{scope}[decoration={
    markings,
    mark=at position 0.5 with {\arrow[scale=1.1,xshift=1.5pt]{>}}}
    ]
\draw[postaction={decorate}] (a4) -- (a2);
\draw[postaction={decorate}] (a4) -- (a3);
\end{scope}

\begin{scope}[decoration={
    markings,
    mark=at position 0.5 with {\arrow[scale=1.1,xshift=3pt]{>>}}}
    ]
\draw[postaction={decorate}] (a2) -- (a1);
\draw[postaction={decorate}] (a3) -- (a1);
\draw[dashed,postaction={decorate}] (a1) -- (a4);
\end{scope}

\end{tikzpicture}
\hspace{1cm}
\begin{tikzpicture}[baseline=(current bounding box.center),font=\small,>=Straight Barb,rotate=90]

\coordinate (a1) at (0,0);
\coordinate (a2) at (-36:1.618*\raggio);
\coordinate (a3) at (36:1.618*\raggio);
\coordinate (a4) at ($(a2)+(36:1.618*\raggio)$);

\fill[GSA] (a1) -- (a2) -- (a4) -- cycle;
\fill[GSB] (a1) -- (a3) -- (a4) -- cycle;

\begin{scope}[decoration={
    markings,
    mark=at position 0.5 with {\arrow[scale=1.1,xshift=1.5pt]{>}}}
    ]
\draw[postaction={decorate}] (a2) -- (a4);
\draw[postaction={decorate}] (a3) -- (a4);
\end{scope}

\begin{scope}[decoration={
    markings,
    mark=at position 0.5 with {\arrow[scale=1.1,xshift=3pt]{>>}}}
    ]
\draw[dashed,postaction={decorate}] (a1) -- (a4);
\draw[postaction={decorate}] (a2) -- (a1);
\draw[postaction={decorate}] (a3) -- (a1);
\end{scope}

\end{tikzpicture}
\end{equation}
Per questo motivo, acutaureo e ottusaureo riscalato sono anche detti \emph{semi-rombi}. Ogni tassellazione del piano con rombi \eqref{eq:Krombi} si può convertire in una con semi-rombi dividendo i rombi a metà. Viceversa, in una tassellazione con semirombi la base di un acutaureo si può attaccare solo alla base della sua immagine riflessa (non ci sono altri lati di lunghezza $1$, e la base dell'ottusaureo si può attaccare solo alla base della sua immagine riflessa (non ci sono altri lati di lunghezza $\varphi+1$). Da ogni tassellazione del piano con semi-rombi ne possiamo quindi ottenere una con rombi rimuovendo le basi di tutti i triangoli: i due protoinsiemi sono mutuamente equivalenti.

Per ``chiudere il cerchio'', rimane da provare che il protoinsieme con triangoli aurei \eqref{eq:Rtriangles} e quello con semi-rombi sono mutuamente equivalenti.

Per passare da semi-aquilone e semi-dardo a semi-rombi, rimuoviamo la ``base'' di ciascun dardo:
\begin{equation}\label{eq:comp1}
\begin{tikzpicture}[baseline=(current bounding box.center),>=Straight Barb]

\coordinate (d) at (0,0);
\coordinate (e1) at (216:\mra);
\coordinate (e2) at (-36:\mra);
\coordinate (dp) at ($(e1)+(e2)$);

\begin{scope}[shift={(-2.4*\mra,0)}]
\coordinate (a) at (0,0);
\coordinate (b1) at (216:\mra);
\coordinate (b2) at (-36:\mra);
\coordinate (c) at ($(b2)+(-\mra,0)$);
\coordinate (ap) at ($(b1)+(b2)$);
\end{scope}

\path[semithick,gray,dashed,-To] ($(b2)+(0.1,0.2)$) edge[bend left=20] ($(e1)+(-0.1,0.2)$);

\fill[GSB] (a) -- (b1) -- (c) -- cycle;
\fill[GTA] (a) -- (c) -- (b2) -- cycle;
\fill[GSB] (d) -- (e1) -- (e2) -- cycle;
\fill[GSA] (ap) -- (b1) -- (c) -- cycle;
\fill[GTB] (ap) -- (c) -- (b2) -- cycle;
\fill[GSA] (dp) -- (e1) -- (e2) -- cycle;

\begin{scope}[decoration={
    markings,
    mark=at position 0.5 with {\arrow[scale=1.1,xshift=1.5pt]{>}}}
    ]
\draw[postaction={decorate}] (a) -- (b2);
\draw[postaction={decorate}] (d) -- (e2);
\draw[postaction={decorate}] (ap) -- (b2);
\draw[postaction={decorate}] (dp) -- (e2);
\draw[postaction={decorate}] (c) -- (b1);
\end{scope}

\begin{scope}[decoration={
    markings,
    mark=at position 0.5 with {\arrow[scale=1.1,xshift=3pt]{>>}}}
    ]
\draw[postaction={decorate}] (a) -- (b1);
\draw[postaction={decorate}] (c) -- (a);
\draw[postaction={decorate}] (d) -- (e1);
\draw[postaction={decorate}] (c) -- (ap);
\draw[postaction={decorate}] (dp) -- (e1);
\draw[postaction={decorate}] (ap) -- (b1);
\draw[postaction={decorate}] (e1) -- (e2);
\draw[postaction={decorate}] (b2) -- (c);
\end{scope}


\end{tikzpicture}
\end{equation}
Bisogna naturalmente provare che questa operazione è ben definita. Questo segue dal fatto che, in una tassellazione con semi-aquilone e semi-dardo, ogni semi-dardo è contenuto in uno ed un solo ``diamante'' (il motivo a sinistra nella figura). Per i dettagli della dimostrazione si rimanda a \cite{DanPen}. La sostituzione in figura elimina tutti i semi-dardi (ma non i semi-aquiloni) e produce una tassellazione con semi-rombi che rispetta le regole di corrispondenza (siccome le decorazioni sui lati esterni del diamante non vengono alterate).

In maniera simile il passaggio da semi-rombi a semi-dardo e semi-aquilone è basato sulla seguente sostituzione:
\begin{equation}\label{eq:comp2}
\begin{tikzpicture}[baseline=(current bounding box.center),>=Straight Barb]

\coordinate (v0) at (0,\mra);
\coordinate (v1) at (198:\mra);
\coordinate (v2) at (-18:\mra);
\coordinate (v3) at (0,0);
\coordinate (v4) at ($(v1)+(-18:\mra)$);

\begin{scope}[xshift=2.3*\mra]
\coordinate (w0) at (0,\mra);
\coordinate (w1) at (198:\mra);
\coordinate (w2) at (-18:\mra);
\coordinate (w4) at ($(w1)+(-18:\mra)$);
\end{scope}

\path[semithick,gray,dashed,-To] ($(v2)+(-0.5,1.2)$) edge[bend left=20] ($(w1)+(0.5,1.2)$);

\fill[GTA] (v2) -- (v3) -- (v4) -- cycle;
\fill[GTB] (v1) -- (v3) -- (v4) -- cycle;
\fill[GSA] (v0) -- (v2) -- (v3) -- cycle;
\fill[GSB] (v0) -- (v1) -- (v3) -- cycle;
\fill[GTA] (w0) -- (w2) -- (w4) -- cycle;
\fill[GTB] (w0) -- (w1) -- (w4) -- cycle;

\begin{scope}[decoration={
    markings,
    mark=at position 0.5 with {\arrow[scale=1.1,xshift=1.5pt]{>}}}
    ]
\draw[postaction={decorate}] (v3) -- (v1);
\draw[postaction={decorate}] (v3) -- (v2);
\draw[postaction={decorate}] (w4) -- (w0);
\end{scope}

\begin{scope}[decoration={
    markings,
    mark=at position 0.5 with {\arrow[scale=1.1,xshift=3pt]{>>}}}
    ]
\draw[postaction={decorate}] (v3) -- (v0);
\draw[postaction={decorate}] (v0) -- (v1);
\draw[postaction={decorate}] (v0) -- (v2);
\draw[postaction={decorate}] (v4) -- (v3);
\draw[postaction={decorate}] (v1) -- (v4);
\draw[postaction={decorate}] (v2) -- (v4);
\draw[postaction={decorate}] (w1) -- (w4);
\draw[postaction={decorate}] (w2) -- (w4);
\draw[postaction={decorate}] (w0) -- (w1);
\draw[postaction={decorate}] (w0) -- (w2);
\end{scope}


\end{tikzpicture}
\end{equation}
Si dimostra, in maniera analoga, che tale trasformazione è ben definita
e che il suo effetto è di rimuovere da ogni acutaureo il lato con singola freccia.
Il risultato è una tassellazione con i triangoli \eqref{eq:Rtriangles}, eccetto che entrambi sono riscalati di un fattore $\varphi$.

Applicando in successione le due sostituzioni qui sopra, otteniamo una operazione chiamata \emph{Composizione}. Tale operazione è invertibile e la sua inversa, che chiamiamo \emph{Decomposizione}, consiste nella seguente sostituzione:
\begin{center}
\begin{tikzpicture}[xscale=-1,>=Straight Barb]

\coordinate (a) at (0,0);
\coordinate (b1) at (216:\mra);
\coordinate (b2) at (-36:\mra);
\coordinate (c) at ($(b2)+(-\mra,0)$);

\begin{scope}[shift={(2.5*\mra,0)}]
\coordinate (d) at (0,0);
\coordinate (e1) at (216:\mra);
\coordinate (e2) at (-36:\mra);
\end{scope}

\path[semithick,gray,dashed,-To] ($(e1)+(0,0.5)$) edge[bend right=20] ($(b2)+(0,0.5)$);

\fill[GSA] (a) -- (b1) -- (c) -- cycle;
\fill[GTB] (a) -- (c) -- (b2) -- cycle;
\fill[GSA] (d) -- (e1) -- (e2) -- cycle;

\begin{scope}[decoration={
    markings,
    mark=at position 0.5 with {\arrow[scale=1.1,xshift=1.5pt]{>}}}
    ]
\draw[postaction={decorate}] (c) -- (b1);
\draw[postaction={decorate}] (a) -- (b2);
\draw[postaction={decorate}] (d) -- (e2);
\end{scope}

\begin{scope}[decoration={
    markings,
    mark=at position 0.5 with {\arrow[scale=1.1,xshift=3pt]{>>}}}
    ]
\draw[postaction={decorate}] (a) -- (b1);
\draw[postaction={decorate}] (b2) -- (c);
\draw[postaction={decorate}] (e1) -- (e2);
\draw[postaction={decorate}] (c) -- (a);
\draw[postaction={decorate}] (d) -- (e1);
\end{scope}


\end{tikzpicture}

\begin{tikzpicture}[>=Straight Barb]

\clip (-5.2,-0.2) rectangle (2.2,3.2);

\coordinate (b1) at (0,0);
\coordinate (b2) at (\mra,0);
\coordinate (f) at ($(b2)+(108:\mra)$);
\coordinate (c) at ($(b2)+(144:\mra)$);
\coordinate (a) at ($(c)+(72:\mra)$);

\begin{scope}[shift={(-2.5*\mra,0)}]
\coordinate (e1) at (0,0);
\coordinate (e2) at (\mra,0);
\path[name path=PA] (e1) -- ++(72:4.5);
\path[name path=PB] (e2) -- ++(108:4.5);
\path[name intersections={of=PA and PB,by=d}];
\end{scope}

\path[semithick,gray,dashed,-To] ($(e2)+(0.3,1.7)$) edge[bend left=20] ($(b1)+(-0.3,1.7)$);

\fill[GTA] (b1) -- (b2) -- (c) -- cycle;
\fill[GTB] (b2) -- (c) -- (f) -- cycle;
\fill[GSA] (c) -- (f) -- (a) -- cycle;
\fill[GTA] (d) -- (e1) -- (e2) -- cycle;

\begin{scope}[decoration={
    markings,
    mark=at position 0.5 with {\arrow[scale=1.1,xshift=1.5pt]{>}}}
    ]
\draw[postaction={decorate}] (c) -- (b2);
\draw[postaction={decorate}] (f) -- (a);
\draw[postaction={decorate}] (e1) -- (d);
\end{scope}

\begin{scope}[decoration={
    markings,
    mark=at position 0.5 with {\arrow[scale=1.1,xshift=3pt]{>>}}}
    ]
\draw[postaction={decorate}] (b2) -- (b1);
\draw[postaction={decorate}] (b2) -- (f);
\draw[postaction={decorate}] (c) -- (a);
\draw[postaction={decorate}] (d) -- (e2);
\draw[postaction={decorate}] (b1) -- (c);
\draw[postaction={decorate}] (f) -- (c);
\draw[postaction={decorate}] (e2) -- (e1);
\end{scope}


\end{tikzpicture}
\end{center}
In Figura~\ref{fig:penrose} vediamo una tassellazione con semi-aquiloni e semi-dardi in rosso, ed in bianco e grigio i tasselli che si ottengono applicando la Decomposizione.

La Decomposizione è ben definita anche quando applicata a tassellazioni il cui supporto non è l'intero piano ma un  sottoinsieme, ed usando tale operazione possiamo mostrare che \emph{esistono tassellazioni di Penrose del piano} costruendone una esplicitamente. Iniziamo dalla seguente tassellazione, il cui supporto è un aquilone:
\[
\mathcal{C}_0:=
\begin{tikzpicture}[baseline=(current bounding box.center),font=\small,>=Straight Barb]

\clip (-1.5,-0.2) rectangle (1.5,2.3);

\coordinate (a4) at (0,0);
\coordinate (a2) at (162:\raggio);
\coordinate (a3) at (18:\raggio);
\path[name path=PB] (a2)-- ++(54:3);
\path[name path=PA] (a3)-- ++(126:3);
\path[name intersections={of=PA and PB,by=a1}];

\fill[GTA] (a1) -- (a2) -- (a4) -- cycle;
\fill[GTB] (a1) -- (a3) -- (a4) -- cycle;

\begin{scope}[decoration={
    markings,
    mark=at position 0.5 with {\arrow[scale=1.1,xshift=1.5pt]{>}}}
    ]
\draw[postaction={decorate}] (a2) -- (a1);
\draw[postaction={decorate}] (a3) -- (a1);
\end{scope}

\begin{scope}[decoration={
    markings,
    mark=at position 0.5 with {\arrow[scale=1.1,xshift=3pt]{>>}}}
    ]
\draw[postaction={decorate}] (a1) -- (a4);
\draw[postaction={decorate}] (a4) -- (a2);
\draw[postaction={decorate}] (a4) -- (a3);
\end{scope}

\draw[red,dashed] (a2) -- ++(18:\raggio) -- (a3);
\shade[ball color=red] ($(a2)+(18:\raggio)$) circle (0.06);
\end{tikzpicture}
\]
Notiamo che non si tratta del prototassello in \eqref{eq:KD}: i triangoli sono attaccati lungo il lato con doppia freccia.
Il pallino rosso indica la posizione dell'origine di $\R^2$ (il punto fisso rispetto alle dilatazioni).

D'ora in poi omettiamo le frecce: disegneremo in bianco i triangoli le cui doppie frecce vanno in senso orario, ed in grigio quelli le cui doppie frecce vanno in senso antiorario. Riscalando $\mathcal{C}_0$ di un fattore $\varphi$ ed applicando la Decomposizione otteniamo:
\[
\mathcal{C}_1:=
\includegraphics[page=2,valign=c]{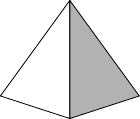}
\]
Riscalando di nuovo di $\varphi$ e applicando la Decomposizione otteniamo:
\[
\mathcal{C}_2:=
\includegraphics[page=3,valign=c]{cartwheel.pdf}
\]
Costruiamo in questo modo una successione $(\mathcal{C}_0,\mathcal{C}_1,\mathcal{C}_2,\ldots)$ di tassellazioni, tutte con lo stesso protoinsieme \eqref{eq:Rtriangles}, e con supporto aquiloni sempre più grandi.
Osserviamo che $\mathcal{C}_0\subset\mathcal{C}_2$ e, siccome per costruzione la Decomposizione commuta con le similitudini del piano, $\mathcal{C}_n\subset\mathcal{C}_{n+2}$ per ogni $n\in\N$. Ne consegue che l'unione
\[
\mathcal{C}:=\bigcup_{n\in 2\N+1}\mathcal{C}_n
\]
è una tassellazione del piano, detta \emph{Cartwheel}. Il metodo con cui è ottenuta tale tassellazione (dilatazione e Decomposizione ripetute) è detto di \emph{sostituzione}.
Non tutte le tassellazioni di Penrose si possono ottenere per sostituzione!

Gran parte delle proprietà locali delle tassellazioni di Penrose si dimostrano utilizzando la Composizione, a cominciare dall'aperiodicità. Supponiamo per assurdo che $\mathcal{T}$ sia una tassellazione di Penrose e che
\[
\mathcal{T}+\vec{v}=\mathcal{T}
\]
per un qualche $\vec{v}\in\R^2$ diverso da zero. Poniamo $\mathcal{T}_0:=\mathcal{T}$ e definiamo una successione $(\mathcal{T}_n)_{n\in\N}$ di tassellazioni di Penrose per induzione in modo che $\mathcal{T}_{n+1}$ sia ottenuto da $\mathcal{T}_n$ con una Composizione ed una dilatazione di $\varphi^{-1}$ (per riportare i tasselli alla dimensione originale). La Composizione commuta con le traslazioni, ma le dilatazioni no. Per ogni $n\in\N$ si ha:
\[
\mathcal{T}_n+\varphi^{-n}\vec{v}=\mathcal{T}_n .
\]
Siccome $\varphi^{-n}\to 0$ per $n\to \infty$, otteniamo tassellazioni con simmetrie traslazionali arbitrariamente piccole. Questo è impossibile in quanto qualunque traslazione di una lunghezza inferiore al diametro del cerchio inscritto nell'ottusaureo non può essere una simmetria.
Abbiamo in questo modo provato che:

\begin{thm}
Il protoinsieme \eqref{eq:Rtriangles} è aperiodico.
\end{thm}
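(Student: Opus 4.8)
The plan is to verify the two defining properties that, following the definition introduced for the Wang prototile sets, make a prototile set aperiodic: first, that the plane admits at least one tiling with prototile set \eqref{eq:Rtriangles}; second, that every such tiling is non-periodic. Both ingredients are already available in the material above, so the proof amounts to assembling them.

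For existence I would point to the Cartwheel tiling $\mathcal{C}$ constructed above by the substitution method. Since the inclusions $\mathcal{C}_n \subset \mathcal{C}_{n+2}$ hold and the supporting kites grow without bound, the union $\mathcal{C} = \bigcup_{n \in 2\N+1} \mathcal{C}_n$ is a genuine tiling of the whole plane with prototile set \eqref{eq:Rtriangles} that respects the matching rules (each Decomposition step preserves them). Hence tilings with this prototile set do exist.

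For non-periodicity I would argue by contradiction exactly as sketched immediately before the statement. Assuming some tiling $\mathcal{T}$ satisfies $\mathcal{T} + \vec{v} = \mathcal{T}$ with $\vec{v} \neq 0$, I define a sequence $(\mathcal{T}_n)_{n\in\N}$ by repeatedly applying Composition followed by a dilation of ratio $\varphi^{-1}$. Because Composition is a purely local operation that commutes with translations while dilation rescales the symmetry vector, each $\mathcal{T}_n$ inherits the translational symmetry $\varphi^{-n}\vec{v}$. As $\varphi^{-n} \to 0$, this yields tilings with arbitrarily short translational symmetries; but a translation of length below the diameter of the circle inscribed in the obtuse golden triangle cannot fix a tiling, since it would force a tile to overlap a distinct congruent copy of itself. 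This contradiction shows $\mathcal{T}$ is aperiodic, and combining the two parts gives the theorem.

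The delicate point I expect is justifying that Composition is well-defined on an \emph{arbitrary} tiling rather than only on those produced by substitution. This rests on the local combinatorial facts recorded above — that the matching rules force the acute triangles to pair into kites and the obtuse ones into darts, and that each half-dart sits inside a unique ``diamond'' — which together ensure that the substitutions \eqref{eq:comp1} and \eqref{eq:comp2} can be carried out unambiguously across the whole tiling. Once this is granted, the commutation of Composition with translations and the minimal-length bound on admissible symmetries are both routine verifications.
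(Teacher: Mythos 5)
Your proposal follows essentially the same route as the paper: existence via the Cartwheel substitution construction, and non-periodicity by the contradiction argument in which repeated Composition followed by dilation of ratio $\varphi^{-1}$ produces arbitrarily small translational symmetries, impossible once the translation length drops below the diameter of the circle inscribed in the obtuse golden triangle. Your added remark that the well-definedness of Composition on an arbitrary tiling is the delicate point is exactly the issue the paper defers to \cite{DanPen}, so the two arguments coincide in both structure and detail.
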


Le tassellazioni del piano con protoinsieme \eqref{eq:Rtriangles} (nel seguito dette semplicemente \emph{tassellazioni di Penrose}) hanno numerose proprietà interessanti:
\begin{enumerate}
\item\label{enum1} Esistono una infinità non numerabile di tassellazioni di Penrose inequivalenti (due tassellazioni si dicono \emph{equivalenti} se possono essere trasformate una nell'altra con una isometria del piano). 

\item\label{enum2} A meno di una equivalenza, esistono esattamente due tassellazioni di Penrose con simmetria 5-fold (una è mostrata in Figura~\ref{fig:5fold}.

\item\label{enum3} Le tassellazioni di Penrose sono \emph{ripetitive} (ogni porzione finita è ripetuta traslata infinite volte nella tassellazione).

\item\label{enum4} Tutte le tassellazioni di Penrose sono \emph{localmente equivalenti} (ogni porzione finita di una tassellazione compare, ripetuta infinite volte, in ogni altra tassellazione).

\item 
Il Cartwheel, che in Figura~\ref{fig:worms} vediamo realizzato con dardi e aquiloni, è invariante per riflessioni rispetto all'asse verticale.
\end{enumerate}

Da \ref{enum2} e \ref{enum4} segue che ogni tassellazione di Penrose \emph{localmente} ha simmetria 5-fold: ovvero è possibile trovare al suo interno sottoinsiemi finiti arbitrariamente grandi con simmetria 5-fold. 

Da \ref{enum4} segue che non è possibile, guardando solo una porzione finita di una tassellazione, capire di quale tassellazione di Penrose si tratta (ad esempio se è il Cartwheel o un'altra tassellazione). Come si fa allora a dimostrare che ne esistono infinite inequivalenti? Il trucco è convertire le tassellazioni in successioni binarie.

Data una tassellazione $\mathcal{T}=\mathcal{T}_0$ di Penrose, scegliamo un punto $x_0$ interno ad un tassello. Applichiamo ripetutamente i due passi \eqref{eq:comp1} e \eqref{eq:comp2} della Composizione e costruiamo in questo modo una successione $(\mathcal{T}_n)_{n\in\N}$ di tassellazioni. Ciascuna tassellazione ha due prototasselli, uno di area maggiore dell'altro. Chiamiamo \emph{largo} il prototassello di area maggiore, e \emph{stretto} l'altro.
Per ogni $n$, esiste un solo tassello di $\mathcal{T}_n$ che ha $x_0$ al suo interno. Poniamo $a_n:=0$ se tale tassello è largo, ed $a_n:=1$ se è stretto. In questo modo trasformiamo la coppia $(\mathcal{T},x_0)$ in una successione binaria $(a_0,a_1,a_2,\ldots)$ detta \emph{successione di indici} (\emph{index sequence}). Per come è definita la Composizione, non è difficile convincersi che in tale successione binaria se $a_n=1$ allora $a_{n+1}=0$ (la stringa $11$ non appare mai nella successione).

Per ottenere una corrispondenza fra tassellazioni di Penrose e successioni binarie, dobbiamo ora eliminare la dipendenza della successione di indici dal punto di base. Per fare questo, introduciamo una relazione di equivalenza.

Diciamo che due successioni $(a_0,a_1,\ldots)$ e $(b_0,b_1,\ldots)$ hanno la \emph{stessa coda} se esiste $N$ intero positivo tale che $a_n=b_n$ per ogni $n\geq N$. La classe di equivalenza di una successione rispetto a tale relazione è detta la sua \emph{coda}.

La successione di indici di una tassellazione dipende dalla scelta del punto di base $x_0$, ma si può provare che la sua coda è indipendente da tale scelta, ed infatti che due tassellazioni sono equivalenti se e solo se le loro successioni di indici hanno la stessa coda. Si può inoltre dimostrare che ogni successione binaria che non contenga la stringa $11$ è la successione di indici di una tassellazione di Penrose. Arriviamo allora al seguente risultato fondamentale:

\begin{thm}
Esiste una corrispondenza biunivoca fra classi di equivalenza di tassellazioni di Penrose e code di successioni binari che non contengono la stringa $11$.
\end{thm}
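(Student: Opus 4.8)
The plan is to build the bijection directly from the index-sequence construction and the three facts just established: that the index sequence of a pointed tiling never contains $11$, that its tail does not depend on the base point, and that two tilings are equivalent if and only if their index sequences share a tail. The theorem is then essentially a matter of packaging these inputs into a single well-defined, injective, surjective map.

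First I would make the candidate map precise. To a Penrose tiling $\mathcal{T}$ together with an interior base point $x_0$ the construction assigns the index sequence $a(\mathcal{T},x_0)=(a_0,a_1,\ldots)\in\{0,1\}^{\N}$, which avoids $11$. By the assumed base-point independence of the tail, the tail of $a(\mathcal{T},x_0)$ depends only on $\mathcal{T}$; call it $\Phi(\mathcal{T})$. Since the index sequence avoids $11$, $\Phi(\mathcal{T})$ is genuinely a tail of an $11$-avoiding sequence, so $\Phi$ lands in the intended codomain. Finally, because equivalent tilings have index sequences with the same tail, $\Phi$ is constant on each equivalence class and therefore descends to a well-defined map $\overline{\Phi}$ from equivalence classes of Penrose tilings to tails of $11$-avoiding binary sequences.

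Next I would verify injectivity and surjectivity. Injectivity is exactly the nontrivial direction of the assumed characterisation: if $\overline{\Phi}([\mathcal{S}])=\overline{\Phi}([\mathcal{T}])$, i.e. the index sequences of $\mathcal{S}$ and $\mathcal{T}$ have the same tail, then $\mathcal{S}$ and $\mathcal{T}$ are equivalent, whence $[\mathcal{S}]=[\mathcal{T}]$. For surjectivity I would invoke the assumed realisation statement: every binary sequence $b$ avoiding $11$ is the index sequence of some Penrose tiling $\mathcal{T}_b$, and then $\overline{\Phi}([\mathcal{T}_b])$ is precisely the tail of $b$. As every tail in the codomain has an $11$-avoiding representative, every element of the codomain is attained, so $\overline{\Phi}$ is onto. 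Combining the three steps yields the asserted bijection.

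The genuinely hard content sits inside the two facts I am permitted to assume, and if I had to establish them from scratch the main obstacle would be the realisation (surjectivity) input. Recovering an honest planar tiling from a prescribed $11$-avoiding sequence means inverting Composition: one decomposes repeatedly, using each bit to prescribe whether the tile currently containing the base point is wide or narrow, producing a nested family of patches whose supports grow without bound, exactly as in the Cartwheel construction where $\mathcal{C}_n\subset\mathcal{C}_{n+2}$. Showing these patches are mutually consistent and exhaust $\R^2$ — together with base-point independence, which reduces to comparing the index sequences of two points that eventually fall inside a common composed tile — is where the real work lies; the assembly into a bijection above is then purely formal.
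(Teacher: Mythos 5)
Your argument is correct and follows exactly the route the paper intends: the theorem is presented there as an immediate consequence of the three facts stated just before it (the index sequence avoids $11$, its tail is independent of the base point and characterises the equivalence class, and every $11$-avoiding sequence is realised), with the detailed verifications deferred to \cite{DanPen}. Your packaging of these inputs into a well-defined, injective and surjective map, and your remark that the real work lies in the realisation and base-point-independence steps via iterated Composition/Decomposition, matches the paper's approach.
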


E' ora un semplice esercizio provare che l'insieme delle code di successioni qui sopra ha la potenza del continuo. Le tassellazioni che si ottengono per sostituzione sono quelle che hanno successione di indici periodica. Il Cartwheel, ad esempio, ha successione di indici identicamente nulla.

Per uno studio più approfondito delle tassellazioni di Penrose si rimanda a \cite{DanPen}.

\section{Pentagriglie}\label{sec:pen2}

Abbiamo già osservato come la colorazione dei rombi 
in Figura~\ref{fig:romper} suggerisca che i tasselli siano ottenuti
proiettando dei cubi unitari, e abbiamo mostrato come sia possibile ottenere
una tassellazione aperiodica proiettando parte di una periodica se la direzione
della proiezione è scelta bene (Figura~\ref{fig:staircase}).

Consideriamo di nuovo i rombi di Penrose \eqref{eq:Krombi},
\begin{center}
\begin{tikzpicture}[baseline=(current bounding box.center),font=\small,>=Straight Barb]

\coordinate (a3) at (-4,0);
\coordinate (a1) at ($(a3)+(-18:1)$);
\coordinate (a4) at ($(a3)+(18:1)$);
\coordinate (a2) at ($(a1)+(a4)-(a3)$);

\coordinate (b3) at (0,0);
\coordinate (b1) at (-54:1);
\coordinate (b4) at (54:1);
\coordinate (b2) at ($(b1)+(b4)$);

\fill[brown!20] (a1) -- (a3) -- (a4) -- (a2) -- cycle;
\fill[brown!20] (b1) -- (b3) -- (b4) -- (b2) -- cycle;

\begin{scope}[decoration={
    markings,
    mark=at position 0.5 with {\arrow[scale=1.1,xshift=1pt]{>}}}
    ]
\draw[postaction={decorate}] (a4) -- (a2);
\draw[postaction={decorate}] (a4) -- (a3);
\draw[postaction={decorate}] (b2) -- (b4);
\draw[postaction={decorate}] (b3) -- (b4);
\end{scope}

\begin{scope}[decoration={
    markings,
    mark=at position 0.5 with {\arrow[scale=1.1,xshift=3pt]{>>}}}
    ]
\draw[postaction={decorate}] (a2) -- (a1);
\draw[postaction={decorate}] (a3) -- (a1);
\draw[postaction={decorate}] (b2) -- (b1);
\draw[postaction={decorate}] (b3) -- (b1);
\end{scope}

\end{tikzpicture}
\end{center}
Questi due prototasselli rappresentano una modifica minimale dei rombi in Figura~\ref{fig:romper}: tutti i lati hanno la stessa lunghezza, e cambia solo la scelta degli angoli, multipli opportuni di $\pi/10$ (compaiono tutti i multipli da $1$ a $4$).

Una porzione di tassellazione del piano con questi rombi, costruita rispettando le regole di corrispondenza, ha l'aspetto in Figura~\ref{fig:rombiproj}.

\begin{figure}[H]
\includegraphics[width=0.8\columnwidth]{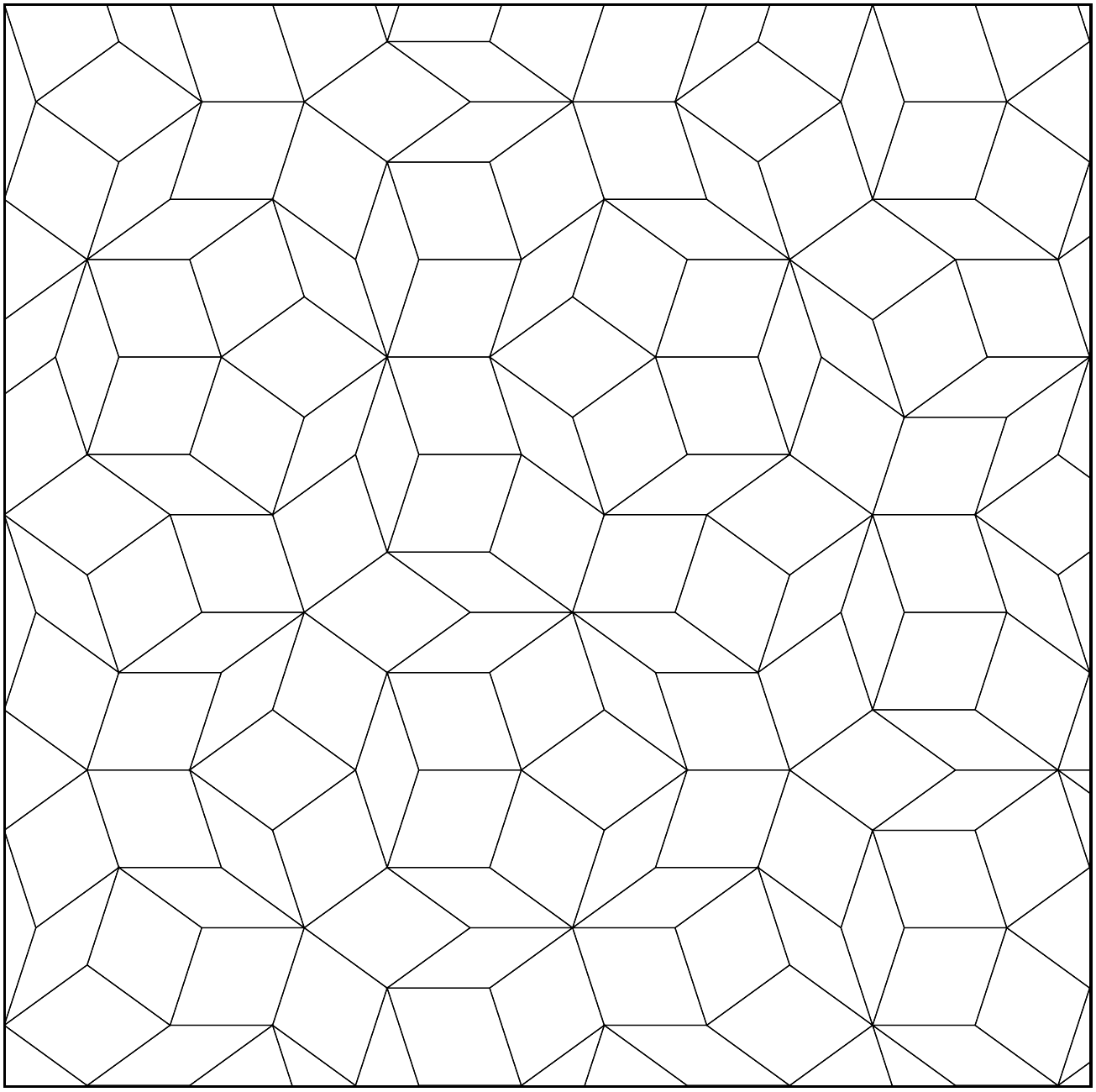}

\caption{Tassellazione con rombi di Penrose}\label{fig:rombiproj}
\end{figure}

\pagebreak

\end{multicols}

\begin{figure}[H]
\includegraphics[height=0.43\textheight]{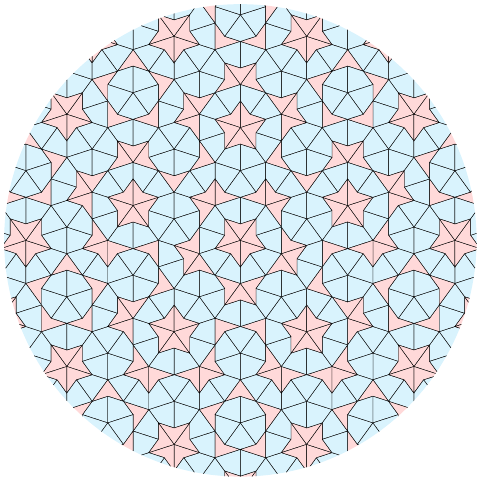}

\caption{Una tassellazione di Penrose con simmetria 5-fold}\label{fig:5fold}
\end{figure}

\begin{figure}[H]
\includegraphics[height=0.44\textheight,rotate=180]{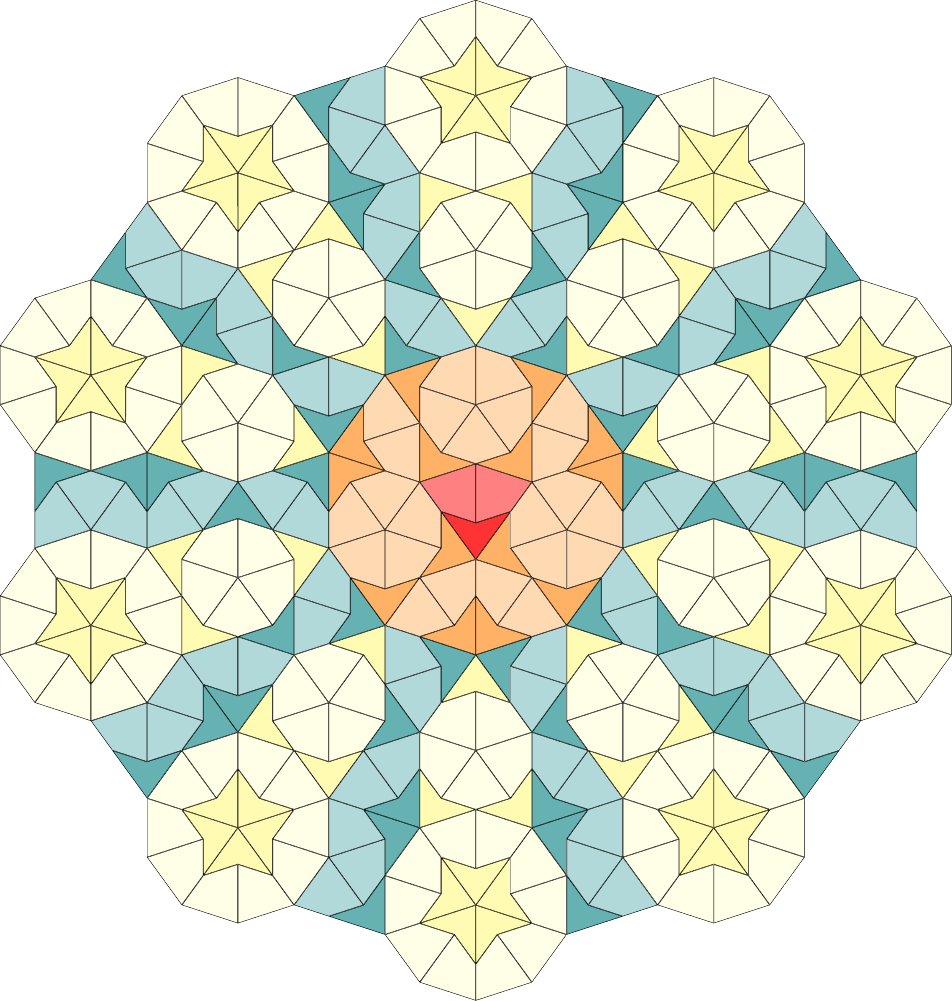}

\caption{Il Cartwheel con dardi e aquiloni}\label{fig:worms}
\end{figure}

\begin{multicols}{2}

Anche in Figura \ref{fig:rombiproj} ci sembra di scorgere proiezioni di cubi unitari, sebbene combinati in modo inusuale, impossibile se fossero proiettati dallo spazio tridimensionale. Ricorda vagamente la litografia \textit{Relatività} (o \textit{Casa di scale}, 1953) di Escher.

Possiamo ottenere tassellazioni di Penrose proiettando cubi unitari?
La risposta a questa domanda è contenuta nel lavoro di N.G.~de Bruijn \cite{dB81}, il quale ha dato un contributo fondamentale allo studio delle tassellazioni di Penrose. Alla base della sua costruzione c'è la nozione di pentagriglia.
Poniamo
\[
\zeta:=e^{2\pi\mathrm{i}/5} ,
\]
radice quinta primitiva dell'unità, e identifichiamo il piano complesso $\C$ con $\R^2$ nel modo usuale.
Dato $\vec{\gamma} = (\gamma_0,\gamma_1,\ldots,\gamma_4) \in\R^5$ tale che
\[
\gamma_0+\ldots+\gamma_4=0 ,
\]
chiamiamo \emph{pentagriglia} di parametro $\vec{\gamma}$ l'insieme
\[
\bigcup_{j=0}^4 \big\{z\in\C:\mathrm{Re}(z\zeta^{-j})+\gamma_j\in\Z\big\} .
\]
Si tratta di cinque fasci di rette parallele, cinque \emph{griglie}, con direzioni ortogonali a quelle dei lati in Figura~\ref{fig:rombiproj}. In ciascuna griglia la distanza fra due rette consecutive è unitaria.

Una pentagriglia si dice \emph{regolare} se in ciascun punto del piano si intersecano al più due sue rette. Una pentagriglia si dice \emph{singolare} se non è regolare.

Con le potenze di $\zeta$ possiamo costruire una base ortonormale dello spazio vettoriale $\C^5$, formata dai cinque vettori
\[
\vec{e}_j:=\frac{1}{\sqrt{5}}(1,\zeta^j,\zeta^{2j},\zeta^{3j},\zeta^{4j}) , \qquad j=-2,\ldots,2.
\]
In $\R^5\subset\C^5$ consideriamo il piano $\Pi_\gamma$ formato dai punti di coordinate
\[
z\vec{e}_1+\bar{z}\vec{e}_{-1}+\vec{\gamma} , \quad z\in\C
\]
(in cui $\bar z$ è il complesso coniugato di $z$) e consideriamo la proiezione ortogonale $\R^5\to \Pi_\gamma$ su tale piano.

In modo simile a quanto fatto nell'esempio in Figura~\ref{fig:staircase}, suddividiamo $\R^5$ in ipercubi unitari. Per ogni 
$\vec{n}=(n_0,\ldots,n_4)\in\Z^5$, consideriamo l'ipercubo unitario (aperto)
\[
C(\vec{n}):=\big\{x\in\R^5: n_i-1<x_i<n_i\;\forall\;i=0,\ldots,4\big\} .
\]
Se l'ipercubo $C(\vec{n})$ interseca il piano $\Pi_\gamma$, allora proiettiamo tale punto $\vec{n}$ sul piano stesso.

\begin{thm}
I punti ottenuti in questo modo sono i vertici di una tassellazione del piano $\Pi_\gamma$ con rombi di Penrose (di lato di lunghezza $\sqrt{2}$), e la tassellazione può essere ricostruita univocamente a partire dal suo insieme di vertici.
\end{thm}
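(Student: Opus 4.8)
The plan is to show that the cut-and-project recipe reproduces, up to a similarity of the plane, de Bruijn's dual construction associated with the pentagrid, and then to verify that the latter is a rhombus tiling. First I would compute the coordinates of a point of $\Pi_\gamma$ in the parametrization $z\mapsto z\vec e_1+\bar z\vec e_{-1}+\vec\gamma$: its $i$-th coordinate is $\lambda_i(z)=\tfrac{2}{\sqrt5}\mathrm{Re}(z\zeta^i)+\gamma_i$. Consequently the open hypercube $C(\vec n)$ meets $\Pi_\gamma$ exactly when there is a $z$ with $n_i-1<\lambda_i(z)<n_i$ for all $i$, that is, when $\vec n=\vec K(z):=(\lceil\lambda_0(z)\rceil,\dots,\lceil\lambda_4(z)\rceil)$. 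After the substitution $w=\tfrac{2}{\sqrt5}\bar z$ the conditions $\lambda_i\in\Z$ become precisely the defining equations of the pentagrid of parameter $\vec\gamma$, so the locally constant map $\vec K$ jumps exactly across the grid lines. This identifies the selected lattice points $\vec n$ bijectively with the mesh cells of (a reflected, rescaled copy of) the pentagrid.

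Next I would analyze the orthogonal projection onto the physical plane. Writing $\vec e_1=\vec u+i\vec v$, the relations $\langle\vec e_1,\vec e_1\rangle=1$ and $\sum_k(\vec e_1)_k^2=\tfrac15\sum_k\zeta^{2k}=0$ force $\vec u\perp\vec v$ and $|\vec u|=|\vec v|=1/\sqrt2$; hence the parametrization scales lengths by $\sqrt2$, and $\pi(\vec n)$ has complex coordinate proportional to $\sum_i n_i\zeta^{-i}$. Thus to each mesh cell is attached the de Bruijn vertex $\sum_i K_i\zeta^{-i}$. Crossing a single line of grid $i$ changes only $K_i$, by $\pm1$, so the vertex moves by one of the five vectors $\pm\pi(e_i)$; computing $\|\pi(e_i)\|$ gives the common rhombus side, and these vectors point in directions differing by multiples of $2\pi/5$. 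At a regular intersection of grids $i$ and $j$ the four surrounding cells carry the vertices $V,\,V+\pi(e_i),\,V+\pi(e_j),\,V+\pi(e_i)+\pi(e_j)$, forming a rhombus whose opening angle is a multiple of $\pi/5$ fixed by $|i-j|\bmod 5$, thick when $|i-j|\equiv\pm1$ and thin when $|i-j|\equiv\pm2\pmod5$ — exactly the two prototiles of \eqref{eq:Krombi}.

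The heart of the argument, and the step I expect to be the main obstacle, is to prove that these local rhombi actually tile $\Pi_\gamma$, with disjoint interiors and covering everything; this does not follow formally from the previous steps, being de Bruijn's key lemma. Here the regularity hypothesis is essential: it guarantees that every intersection point of the pentagrid lies on exactly two lines, so that only non-degenerate rhombi, never larger polygons, occur. I would establish the tiling property by showing that the rhombi meeting at each de Bruijn vertex have angles summing to $2\pi$ with no overlap — equivalently, that the map sending each mesh cell to its vertex, extended affinely over the dual complex, is a homeomorphism onto the plane — using the de Bruijn index $\sum_i K_i(z)$, which on a regular pentagrid remains in $\{1,2,3,4\}$ and governs how cells are glued around a vertex.

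Finally, for unique reconstruction I would use that the only integer relation among $1,\zeta^{-1},\dots,\zeta^{-4}$ is $\sum_i\zeta^{-i}=0$, since $[\mathbb{Q}(\zeta):\mathbb{Q}]=4$. Hence the map $\Z^5\to\C$, $\vec K\mapsto\sum_iK_i\zeta^{-i}$, has kernel exactly $\Z(1,1,1,1,1)$, so each vertex determines its tuple $\vec K$ modulo $(1,\dots,1)$; the index constraint $\sum_iK_i\in\{1,2,3,4\}$ then selects a canonical representative. Two vertices span a tiling edge precisely when their canonical tuples differ by a single $e_i$, which recovers every rhombus, and the edge direction $\zeta^{-i}$ together with its sign restores the matching-rule arrows of \eqref{eq:Krombi}. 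Since the entire procedure depends only on the vertex set, the tiling is reconstructed uniquely.
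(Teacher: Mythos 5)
The paper does not actually prove this theorem: it explicitly defers to de Bruijn's original article and to \cite{DanPen}, so there is no in-text argument to compare yours against. Your outline tracks de Bruijn's duality construction faithfully, and the computational steps are correct: the identification $\lambda_i(z)=\tfrac{2}{\sqrt5}\mathrm{Re}(z\zeta^i)+\gamma_i$, the substitution $w=\tfrac{2}{\sqrt5}\bar z$ matching the hypercube faces with the pentagrid lines, the orthogonality relations $|\vec u|=|\vec v|=1/\sqrt2$ giving the $\sqrt2$ scaling, the vertex formula $\sum_i K_i\zeta^{-i}$, the thick/thin dichotomy according to $|i-j|\bmod 5$, and the reconstruction argument via $[\mathbb{Q}(\zeta):\mathbb{Q}]=4$ together with the index constraint are all sound.

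The genuine gap is exactly where you say it is, and flagging it does not fill it: the claim that the rhombi attached to the intersection points of the pentagrid have pairwise disjoint interiors and cover all of $\Pi_\gamma$ is the entire content of the theorem, and your third paragraph only announces a strategy for it. The local statement (``the rhombi around each de Bruijn vertex have angles summing to $2\pi$'') itself requires a real argument --- one must determine, from the combinatorics of the index function $\sum_i K_i(z)\in\{1,2,3,4\}$, which grid-line crossings contribute a rhombus corner at a given vertex and in which cyclic order --- and even granted it, local consistency at every vertex does not formally yield a global tiling without an additional properness or degree argument for the dual map (this is the role of de Bruijn's key lemma, or of the area/covering count in \cite{DanPen}). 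Two further points to address: your argument assumes the pentagrid is regular, a hypothesis not stated in the theorem (the paper's later remark about singular pentagrids suggests it is implicit, but you should say so explicitly); and the matching rules of \eqref{eq:Krombi}, which distinguish the two vertices of each rhombus diagonal, are not recovered merely from the edge direction $\zeta^{-i}$ --- one needs the finer vertex decoration coming from the values of the index at the two endpoints. As it stands the proposal is a correct roadmap to the cited proof rather than a proof.
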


La dimostrazione di questo teorema è piuttosto elaborata, e può essere trovata nell'articolo originale di de Brujin
\cite{dB81}, oppure in \cite{DanPen} (in particolare, si veda la Sez.~5.2 e la Prop.~4.6). Si può inoltre mostrare che:
\begin{enumerate}
\item Una tassellazione ottenuta in questo modo dipende da $\vec{\gamma}$ solo attraverso il numero complesso
\[
\omega_\gamma:=\sum_{j=0}^4\gamma_j\zeta^{2j} .
\]
\item
Due tassellazioni associate a pentagriglie $\vec{\gamma}$ e $\vec{\gamma}'$ sono uguali se e solo se $\omega_\gamma=\omega_{\gamma'}$.

\item
Due tassellazioni associate a pentagriglie $\vec{\gamma}$ e $\vec{\gamma}'$ sono equivalenti se e solo se, per un qualche $k\in\{0,\ldots,4\}$, la differenza $\omega_\gamma-\zeta^{2k}\omega_{\gamma'}$ appartiene all'ideale $\mathcal{I}$ in $\Z[\zeta]$ generato da $1-\zeta$.

\item
Una pentagriglia $\vec{\gamma}$ è singolare se e solo se, per un qualche $\lambda\in\R$ e $k\in\{0,\ldots,4\}$,
si ha $\omega_\gamma-\mathrm{i}\lambda\zeta^k\in\mathcal{I}$ .
\end{enumerate}
Il lavoro \cite{dB81} è il punto di partenza per l'applicazione della Teoria Algebrica dei Numeri alle tassellazioni del piano.

Un risultato importante della teoria sviluppata da de Bruijn è che ogni tassellazione di Penrose si può ottenere a partire da una pentagriglia, se si tiene conto anche delle pentagriglie singolari (ma la costruzione in questo caso diventa più complicata).

Cosa c'è di magico nel numero $5$? Per $n>1$, l'anello
\[
\Z[e^{2\pi\mathrm{i}/n}]
\]
delle combinazioni intere delle potenze di $e^{2\pi\mathrm{i}/n}$ è detto \emph{anello ciclotomico}. Identificando $\C$ con $\R^2$, il sottoinsieme dei punti del piano appartenenti a tale anello è un reticolo se $n\in\{2,3,4,6\}$, ed ha immagine densa in $\R^2$ in tutti gli altri casi. 
Notiamo che i valori di $n$ per cui si ha un reticolo sono gli stessi del teorema di restrizione cristallografica. Si può immaginare di ripetere la costruzione di de Brujin proiettando da $\R^n$ con $n\geq 7$, per ottenere tassellazioni con rombi aperiodiche e localmente con simmetria rotazionale $n$-fold (ad esempio $8$-fold). Simmetrie del genere hanno interesse in chimica perché si osservano nei \emph{quasi-cristalli}, ma quella dei quasi-cristalli è tutta un'altra storia (raccontata, ad esempio, in \cite{Fer18}). La scoperta dei quasi-cristalli negli anni `80 ad opera di Dan Shechtman portò un rinnovato interesse per le tassellazioni. Lo spettro di diffrazione degli elettroni in un quasi-cristallo ha una struttura priva di periodicità (al contrario dei solidi cristallini) che si può ottenere immaginando che gli atomi siano disposti nei vertici di una tassellazione poligonale aperiodica. La definizione e lo studio di ``quasi-cristalli matematici'' ed il loro legame con le tassellazioni si può trovare ad esempio in \cite{Baa02}.

\section{Tassellazioni e dinamica simbolica}\label{sec:simdin}

Un \emph{sistema dinamico discreto} è una coppia $(X,\sigma)$, in cui $X$ è uno spazio topologico e $\sigma:X\to X$ un omeomorfismo locale. Interpretiamo i punti di $X$ come ``stati'' del sistema fisico che ci interessa studiare, e $\sigma$ come generatore di una evoluzione temporale: un sistema che al tempo $0$ è nello stato $x_0\in X$,
al tempo $n$ si troverà nello stato $\sigma^n(x_0)$. (Attenzione: qui il termine ``discreto'' si riferisce alla variabile tempo, non alla topologia di $X$.)
In termini più astratti, $\sigma$ genera una azione del semigruppo $(\N,+,0)$ su $X$. Nel caso in cui $\sigma$ è un omeomorfismo, e quindi invertibile, questa si estende ad una azione del gruppo $(\Z,+,0)$. Nel secondo caso, chiamiamo \emph{orbita} di $x\in X$ l'insieme $\{\sigma^n(x)\mid n\in\Z\}$.
Nel primo caso, esistono vari nozioni di orbita di un punto $x$ per l'azione di un semigruppo e quella che interessa a noi è l'insieme $\{\sigma^n(x)\mid n\in\N\}$, detto a volte \emph{orbita unilaterale}.

In analogia con le tassellazioni, una tipica domanda che ci si può porre
su un sistema dinamico è:
\begin{equation}\label{eq:perorb}
\text{Esistono orbite periodiche?}
\end{equation}

Una famiglia di sistemi dinamici particolarmente interessante è la seguente.
Dato un insieme finito e non vuoto $A$ (che chiameremo \emph{alfabeto}), possiamo considerare l'insieme $A^{\N}$ di tutte le successioni di elementi di $A$.
Date due successioni $a=(a_0,a_1,a_2,\ldots)$ e $b=(b_0,b_1,b_2,\ldots)$, se $n$ è il più piccolo intero tale che $a_{n-1}=b_{n-1}$ ed $a_n\neq b_n$, poniamo $d(a,b):=2^{-n}$. Poniamo $d(a,b):=0$ se $a=b$, e $d(a,b):=1$ se $a_0\neq b_0$. La funzione $d$ così definita è una metrica su $A^{\N}$, ed induce una topologia che si può mostrare essere la topologia prodotto costruita mettendo su $A$ la topologia discreta.

La funzione $d$ è in effetti una \emph{ultrametrica}: per ogni $a,b,c\in X$,
si ha
\[
d(a,c)\leq\max\big\{d(a,b),d(b,c)\big\} .
\]
Gli spazi ultrametrici hanno molte proprietà particolari, che si riflettono sulla loro topologia. Ad esempio:
\begin{itemize}
\item Due palle aperte con lo stesso raggio si intersecano se e solo se sono uguali.
\item Data una palla aperta, ogni suo punto è un suo centro.
\item Se $X$ è compatto, ogni palla aperta è anche chiusa, e l'insieme delle palle di raggio $r>0$ fissato è una partizione di $X$.
\end{itemize}
In particolare, dall'ultima proprietà segue che uno spazio ultrametrico compatto è totalmente disconnesso.
Se, in aggiunta, $X$ non ha punti isolati, allora è omeomorfo all'\emph{insieme di Cantor}.
(Quest'ultima affermazione è dimostrata ad esempio in \cite[Cap.~1]{Put18}, mentre le altre dimostrazioni sono un semplice esercizio.)

Nell'insieme $A^{\N}$, un omeomorfismo locale è dato dalla funzione che elimina il primo termine di una successione:
\[
\sigma(a_0,a_1,a_2,\ldots):=(a_1,a_2,\ldots) .
\]
In generale, un sottoinsieme chiuso $X\subseteq A^{\N}$ è detto \emph{subshift} se $\sigma(X)\subseteq X$. In questo caso, possiamo restringere $\sigma$ ad $X$ ed ottenere un sistema dinamico $(X,\sigma)$. Tali sistemi dinamici sono l'oggetto di studio di quel campo della matematica chiamato Dinamica Simbolica~\cite{LM95}.

\smallskip

Non è difficile adattare le costruzioni e le definizioni appena date al caso delle successioni bilatere $A^{\Z}$. La distanza fra $a,b\in A^{\Z}$ è definita come l'inf di $2^{-|n|}$ fatto su tutti gli $n\in\Z$ tali che $a_n\neq b_n$, l'applicazione $\sigma$ è data da $\sigma(a)_n:=a_{n+1}$ (per ogni $a\in X$ ed $n\in\Z$) ed è invertibile (quindi un omeomorfismo, piuttosto che solamente un omeomorfismo locale). La nozione di ``subshift'' di $A^{\Z}$ è analoga a quella per $A^{\N}$.

\begin{ex}[Domino]
Sia $A$ l'insieme delle tre tessere del domino
$(3, 2)$, $(1, 3)$ e $(2, 1)$.
L'insieme delle tassellazioni unidimensionali con queste tre tessere è
un subshift di $A^{\Z}$. Ad ogni tassellazione possiamo associare la successione che in posizione $n\in\Z$ ha la tessera del domino centrata nel punto di coordinata $2n$ (ciascuna tessera ha lunghezza $2$).
Tale subshift è formato da tutte le successioni che non contengono le combinazioni proibite
\[
(3,2)(1,3) \qquad\quad
(1,3)(2,1) \qquad\quad
(2,1)(3,2) .
\]
\end{ex}

\smallskip

\begin{ex}[Penrose]
L'insieme $X\subset \{0,1\}^{\N}$ delle successioni in cui ogni $1$ è seguito da $0$ è un 
subshift. Si tratta dell'insieme delle successioni di indici che parametrizzano le tassellazioni di Penrose. Notiamo che una successione appartiene ad $X$ se e solo se non contiene la stringa $11$: in questo senso, la stringa $11$ è una \emph{parola proibita}.
Notiamo inoltre che due successioni $a,b$ hanno la stessa coda se e solo se $\sigma^n(a)=\sigma^n(b)$ per un qualche $n\geq 0$ (dopo un certo lasso di tempo evolvono nella stessa successione).
\end{ex}

Per semplicità di trattazione, dimentichiamo le successioni bilatere e concentriamoci sui subshift di $A^{\N}$.

Sia $F$ un insieme numerabile di parole nell'alfabeto $A$, che chiameremo \emph{parole proibite}. Non è difficile mostrare che l'insieme
\[
X_F:=\big\{a\in A^{\N}\;\big|\;a \text{ non contiene stringhe di }F\big\}
\]
è un subshift. Viceversa, se $X$ è un subshift, esiste $F$ numerabile tale che $X=X_F$
(si prenda ad esempio come insieme $F$ la collezione di tutte le parole nell'alfabeto $A$
che non compaiono in alcuna successione di $X$).
Se $X=X_F$ diciamo che il subshift $X$ è \emph{definito} dall'insieme $F$.

\begin{df}
Un subshift è detto di \emph{tipo finito} (STF) se è definito da un insieme \underline{\smash{finito}} di parole proibite.
\end{df}

Non ogni subshift è di tipo finito. I STF si possono generare usando un \emph{automa a stati finiti} in maniera simile a come generavamo tassellazioni con pezzi del domino usando dei grafi. La definizione di automa va oltre gli scopi di questo saggio, e si può trovare ad esempio in \cite{LM95}.

Se $(X,\sigma)$ è un subshift, una domanda naturale è:
\begin{equation}\label{eq:persuc}
\text{Esistono in $X$ successioni periodiche?}
\end{equation}
Notiamo che le domande \eqref{eq:perorb} e \eqref{eq:persuc} sono equivalenti. L'orbita passante per un punto $x\in X$ è periodica se e solo se $\sigma^n(x)=x$ per un qualche $n\geq 0$.
Ma questo equivale a dire che $x$ è dato dalla stringa $(x_0,\ldots,x_{n-1})$ ripetuta all'infinito, ovvero la successione $x$ è periodica.

Un'altra domanda naturale, dato un insieme di parole proibite $F$, è:
\begin{equation}\label{eq:vuoto}
\text{Il subshift definito da $F$ è non vuoto?}
\end{equation}

In maniera molto simile a quanto fatto nel caso del domino, si può mostrare che:

\begin{prop}\label{prop:STFper}
Ogni STF non-vuoto contiene almeno una successione periodica.
\end{prop}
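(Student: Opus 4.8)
The plan is to mirror the domino argument from Section~\ref{sec:pdg}: encode the local constraints of the subshift as a finite graph, produce an infinite path from the non-emptiness hypothesis, extract a cycle, and read a periodic sequence off that cycle. The only genuinely new point will be checking that the periodic sequence so obtained does not create any forbidden word at the ``seam'' where its repeating block closes up.

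Since $X = X_F$ is non-empty and $F$ is finite, I would first set $N := \max\{|w| : w \in F\}$, the length of the longest forbidden word (the case $N \le 1$ is immediate, since then $X$ is a full shift on the non-forbidden letters and any allowed letter repeated is periodic, so assume $N \ge 2$). Then I build the graph $G = (\mathcal{V},\mathcal{E})$ whose vertices are the words of length $N-1$ over $A$ that contain no element of $F$, putting an edge from $u=(u_1,\dots,u_{N-1})$ to $v=(v_1,\dots,v_{N-1})$ exactly when $v_i = u_{i+1}$ for $1 \le i \le N-2$ and the merged word $(u_1,\dots,u_{N-1},v_{N-1})$ of length $N$ still avoids $F$. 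The set $\mathcal{V}$ is finite ($|\mathcal{V}| \le |A|^{N-1}$), and by construction a sequence $a \in A^{\N}$ lies in $X$ if and only if its successive windows $W_k := (a_k,\dots,a_{k+N-2})$ form an infinite path in $G$.

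Next I would invoke the non-emptiness of $X$: fixing $a \in X$, the windows $(W_0,W_1,\dots)$ give an infinite path in the finite graph $G$, so the graph lemma from Section~\ref{sec:pdg} yields a cycle, that is, indices $i < j$ with $W_i = W_j$ joined by edges of $G$. Setting $p := j-i$ and repeating the block $(a_i,\dots,a_{j-1})$ indefinitely produces the candidate $b := (a_i,a_{i+1},\dots,a_{j-1})^{\infty}$, a sequence of period $p$.

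The main obstacle, and the step deserving care, is proving $b \in X$. Here I would exploit the overlap $W_i = W_j$, which says $a_{i+\ell} = a_{j+\ell}$ for $0 \le \ell \le N-2$, i.e. that $a$ is $p$-periodic on the index range $[i,\,i+p+N-2]$. A short computation then shows that every length-$N$ window of $b$ coincides with a length-$N$ window of $a$: windows lying inside a single copy of the block are literally subwords of $a$, while a window straddling a seam is pulled back, letter by letter, to a window of $a$ precisely because of this local periodicity (a small modular bookkeeping shows the chaining never exceeds the periodicity range, even when $p < N-1$). Since $a$ contains no element of $F$ and every forbidden word has length at most $N$, hence occurs, if at all, inside some length-$N$ window, the sequence $b$ too avoids $F$. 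Therefore $b$ is a periodic sequence of $X$, as required.
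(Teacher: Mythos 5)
Your proof is correct and is essentially the same argument as the paper's: both rest on the pigeonhole principle applied to the finitely many windows of length comparable to the longest forbidden word, followed by the observation that forbidden words are short enough that only local windows of the periodized sequence need checking, and these all already occur in the original sequence. The only differences are cosmetic — you package the pigeonhole step as the graph lemma of Sezione~\ref{sec:pdg} applied to the de Bruijn graph on $(N-1)$-windows, and you verify the seam more carefully (handling overlapping repetitions, i.e.\ periods $p<N-1$, via the modular chaining), whereas the paper directly extracts two non-overlapping occurrences of a length-$k$ word to form $aba$ and repeats $ab$.
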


\begin{proof}
L'argomento è elementare.
Sia $X$ un subshift non vuoto definito da un insieme finito di parole proibite, la più lunga delle quali ha lunghezza $k$, e sia $x\in X$. Siccome esistono un numero finito di parole di lunghezza $k$, esiste una stringa di lunghezza $k$ ripetuta in $x$ almeno due volte. Cioè, esiste una stringa in $x$ della forma $aba$, con $a$ di lunghezza $k$ e $b$ di lunghezza opportuna. Tale stringa non contiene parole proibite, il che prova che la successione periodica
\[
abababab\ldots
\]
non contiene parole proibite, ovvero appartiene ad $X$.
\end{proof}

Un teorema analogo a quello di estensione per tassellazioni del piano garantisce
che, dato un insieme finito di parole proibite $F$, se è possibile scrivere stringhe arbitrariamente lunghe che non contengono parole proibite, allora $X_F$ è non vuoto \cite{LM95}. Da questo, e dalla proposizione Prop.~\ref{prop:STFper}, segue che esiste un algoritmo che permette di stabilire in un tempo finito se, dato $F$, il subshift $X_F$ è vuoto oppure no.

Come nel caso del domino e delle tassellazioni di Wang, la situazione cambia drasticamente se consideriamo la versione bidimensionale del problema, ovvero consideriamo l'insieme $A^{\N^2}$ di tutte le funzioni $\N^2\to A$. Possiamo visualizzare gli elementi di $A^{\N^2}$ come matrici infinite di elementi di $A$, e c'è una ovvia generalizzazione dello shift $\sigma$ ad una coppia di shift di $A^{\N^2}$ in direzione orizzontale e verticale.
Un subshift, in questo caso, sarà un sottoinsieme $X\subseteq A^{\N^2}$ chiuso ed invariante rispetto ad entrambi questi shift.
Si può provare che:

\begin{thm}
Esiste un SFT bidimensionale non vuoto che non possiede alcuna configurazione periodica.
\end{thm}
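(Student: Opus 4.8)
Il piano è di tradurre letteralmente un protoinsieme di Wang aperiodico in un SFT bidimensionale, sfruttando il fatto che l'esistenza di un tale protoinsieme è già stata stabilita: il protoinsieme di Kari $P=T_2\cup T_{2/3}$ ammette tassellazioni (Prop.~\ref{prop:kari2}) e tutte sono aperiodiche (Prop.~\ref{prop:kari1}). Lavorerei con configurazioni bilatere, cioè con funzioni in $A^{\Z^2}$ dove $A:=P$, cosa lecita poiché --- come già osservato --- le definizioni date per $A^{\N}$ si adattano senza difficoltà al caso bidimensionale e bilatero. Come insieme finito di parole proibite $F$ prenderei tutte le coppie di tasselli orizzontalmente adiacenti $(t,t')$ con $t_e\neq t'_o$ e tutte le coppie verticalmente adiacenti $(t,t')$ con $t_n\neq t'_s$. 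Essendo $A$ finito, anche $F$ lo è, e dunque $X_F$ è un SFT.

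Per prima cosa osserverei che la codifica è \emph{fedele}: una configurazione $x\in A^{\Z^2}$ evita le parole di $F$ se e solo se soddisfa le regole di corrispondenza \eqref{eq:wang}, ossia se e solo se $x$ è una tassellazione di Wang con protoinsieme $P$. Da questa identificazione segue subito che $X_F$ è non vuoto: per la Prop.~\ref{prop:kari2} esiste almeno una tassellazione di Wang con protoinsieme $P$, che letta come funzione $\Z^2\to A$ è un elemento di $X_F$.

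Resterebbe da escludere le configurazioni periodiche. Una configurazione $x\in X_F$ è periodica quando è fissata da un qualche shift non banale, ovvero quando $x(i+a,j+b)=x(i,j)$ per ogni $(i,j)\in\Z^2$ e per un opportuno $(a,b)\in\Z^2\smallsetminus\{(0,0)\}$; ma questa è esattamente la definizione di tassellazione di Wang periodica. Poiché ogni tassellazione con protoinsieme $P$ è aperiodica (Prop.~\ref{prop:kari1}, la cui dimostrazione si appoggia al Teorema~\ref{thm:wang} per ridursi al caso bi-periodico), nessun $x\in X_F$ può essere periodico, il che completerebbe la costruzione di un SFT bidimensionale non vuoto e privo di configurazioni periodiche.

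La parte più delicata non sarà un nuovo argomento analitico --- il contenuto sostanziale del teorema coincide infatti con l'esistenza di un protoinsieme di Wang aperiodico --- bensì la verifica che la corrispondenza fra $A^{\Z^2}$ e l'insieme delle tassellazioni di Wang sia una corrispondenza biunivoca che scambia \emph{validità} con \emph{regole di corrispondenza} e \emph{periodicità di shift} con \emph{periodicità traslazionale}. In particolare occorrerà prestare attenzione nel passaggio dal modello unilatero $A^{\N^2}$ a quello bilatero $A^{\Z^2}$: nel caso bilatero le due nozioni di periodicità coincidono perfettamente, mentre una configurazione su $\N^2$ periodica in una sola direzione non si estende in modo ovvio a una tassellazione periodica dell'intero piano, ed è anche per evitare questa patologia che conviene formulare l'enunciato direttamente su $\Z^2$.
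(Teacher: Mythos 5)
La tua proposta è corretta e segue essenzialmente la stessa strada che l'articolo lascia implicita (e demanda a \cite{LM95}): codificare un protoinsieme di Wang aperiodico --- qui quello di Kari --- come SFT bidimensionale, identificando le configurazioni ammissibili con le tassellazioni di Wang, e ricavando la non-vuotezza dalla Prop.~\ref{prop:kari2} e l'assenza di configurazioni periodiche dalla Prop.~\ref{prop:kari1}. L'unica divergenza, lavorare su $\Z^2$ anziché sul modello $\N^2$ introdotto nel testo, è ragionevole e da te esplicitamente motivata; per la versione su $\N^2$ basterebbe aggiungere un argomento di estensione nello stile del Teorema~\ref{thm:wang} per trattare i periodi con una componente nulla.
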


Una conseguenza di questo teorema è che non esiste nessun algoritmo che permetta di stabilire (in un tempo finito) se un STF bidimensionale è vuoto oppure no.

Per i dettagli rimandiamo al testo \cite{LM95}.

\section{Ein stein, una pietra}\label{sec:einstein}

Le tassellazioni di Penrose hanno numerose proprietà interessanti.
Il loro fascino, inoltre, è innegabile. Un amante delle simmetrie potrebbe passare ore ad osservarle cercando di scorgere strutture e regolarità. Cercando di scorgere ordine in un caos che è solo apparente.
Il valore estetico delle tassellazioni di Penrose è testimoniato anche dai pavimenti ad esse ispirati, che si possono trovare in varie università in giro per il mondo (a Oxford, Miami, Western Australia, Texas A{\&}M University, e in molte altre ancora).

La domanda lasciata naturalmente aperta dalla scoperta del protoinsieme di Penrose è se fosse possibile trovare un protoinsieme aperiodico formato da un solo prototassello, o \emph{ein stein} (``una pietra'', in tedesco).
Una prima risposta affermativa è stata data da Socolar e Taylor in \cite{ST12}. Il prototassello da loro inventato è un semplice esagono regolare, ma decorato in modo che opportune regole di corrispondenza forzino l'aperiodicità delle tassellazioni. Tassellazioni equivalenti, in cui l'aperiodicità è forzata dalla sola forma dei tasselli, si ottengono usando il prototassello nella figura seguente:
\begin{center}
\tikz{\node[xscale=-1,inner sep=-7pt]{\includegraphics[scale=0.22]{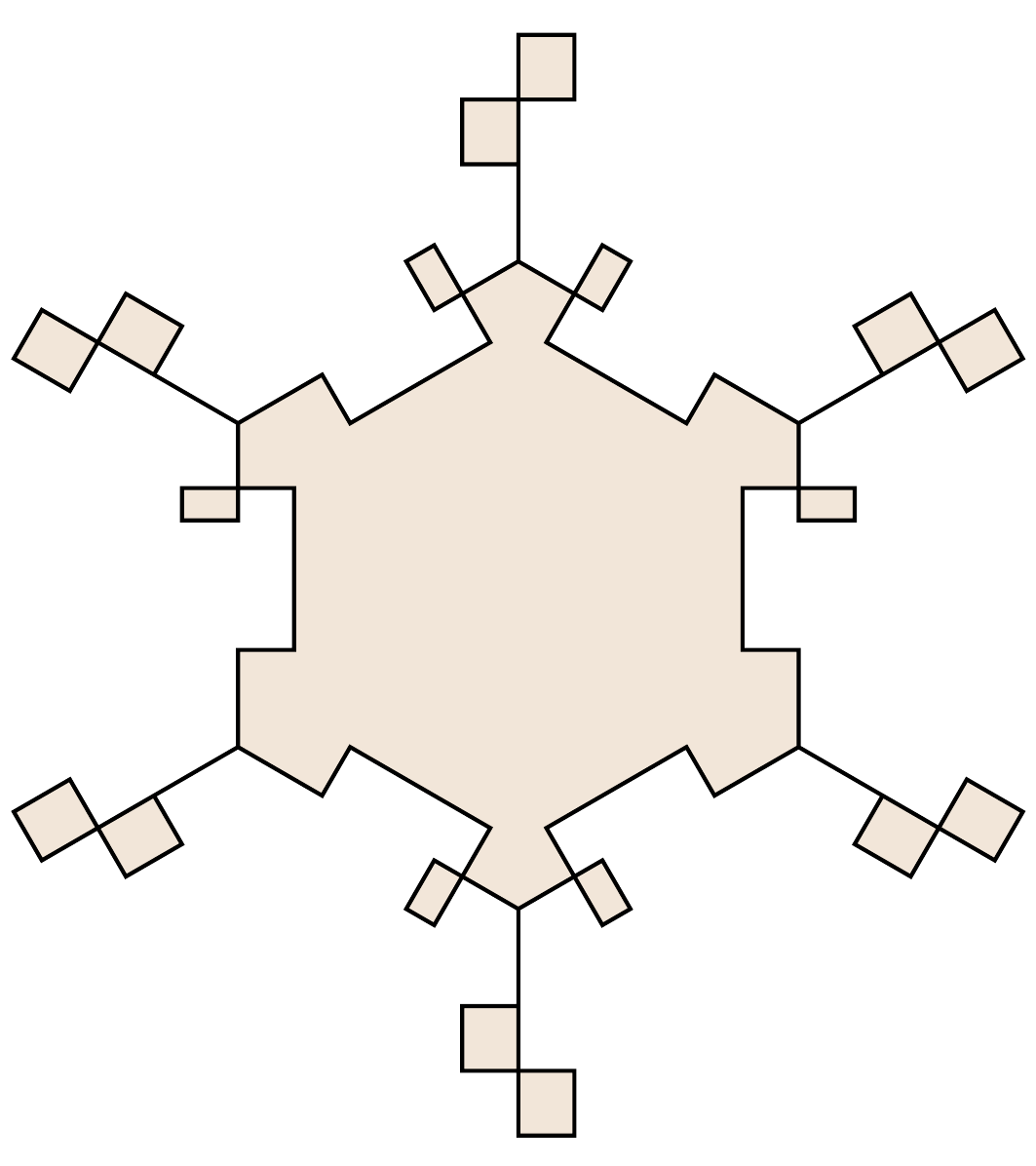}}}
\end{center}
Si tratta di un insieme connesso, ma con interno disconnesso. Inoltre, per tassellare il piano abbiamo bisogno di usare anche la sua immagine riflessa. A seconda quindi di come si decide di definire un prototassello (come classe di isometrie dirette --- ovvero niente riflessioni --- o di isometrie qualsiasi), si può considerare quello di Socolar e Taylor un protoinsieme formato da un singolo prototassello, o da due.

Più di recente, il matematico amatoriale David Smith, ``giocando'' con poliforme, si è imbattuto forse per caso nell'insieme mostrato in grigio nella figura seguente, assieme alla sua immagine riflessa in blu scuro:
\begin{center}
\begin{tikzpicture}

\coordinate (a0) at (0,0);
\coordinate (b0) at ($(120:1)+(180:1)$);
\coordinate (c0) at ($(120:1)+(60:1)$);

\foreach \k in {1,...,7} {
	\coordinate (a\k) at (240+60*\k:1);
	\coordinate (b\k) at ($(b0)+(a\k)$);
	\coordinate (c\k) at ($(c0)+(a\k)$);
	}

\fill[hat]
 (c0) -- ($(a3)!0.5!(a4)$) -- (a3) -- ($(a3)!0.5!(a2)$) -- (a0) -- ($(a4)!0.5!(a5)$) -- (a5) -- (b6) -- ($(b6)!0.5!(b5)$) -- (b0) -- ($(b3)!0.5!(b4)$) -- (b3) -- ($(c5)!0.5!(c4)$) -- cycle;

\draw[gray,very thin] (a1) -- (a2) -- (a3) -- (a4) -- (a5) -- (a6) -- cycle;
\draw[gray,very thin] (b1) -- (b2) -- (b3) -- (b4) -- (b5) -- (b6) -- cycle;
\draw[gray,very thin] (c1) -- (c2) -- (c3) -- (c4) -- (c5) -- (c6) -- cycle;

\foreach \k [evaluate=\k as \knext using int(\k+1)] in {1,...,6} {
	\draw[gray,very thin] (a0) -- ($(a\k)!0.5!(a\knext)$);
	\draw[gray,very thin] (b0) -- ($(b\k)!0.5!(b\knext)$);
	\draw[gray,very thin] (c0) -- ($(c\k)!0.5!(c\knext)$);
	}

\draw[thick]
 (c0) -- ($(a3)!0.5!(a4)$) -- (a3) -- ($(a3)!0.5!(a2)$) -- (a0) -- ($(a4)!0.5!(a5)$) -- (a5) -- (b6) -- ($(b6)!0.5!(b5)$) -- (b0) -- ($(b3)!0.5!(b4)$) -- (b3) -- ($(c5)!0.5!(c4)$) -- cycle;

\end{tikzpicture}
\hspace{1cm}
\begin{tikzpicture}[yscale=-1]

\coordinate (a0) at (0,0);
\coordinate (b0) at ($(120:1)+(180:1)$);
\coordinate (c0) at ($(120:1)+(60:1)$);

\foreach \k in {1,...,7} {
	\coordinate (a\k) at (240+60*\k:1);
	\coordinate (b\k) at ($(b0)+(a\k)$);
	\coordinate (c\k) at ($(c0)+(a\k)$);
	}

\fill[shirt]
 (c0) -- ($(a3)!0.5!(a4)$) -- (a3) -- ($(a3)!0.5!(a2)$) -- (a0) -- ($(a4)!0.5!(a5)$) -- (a5) -- (b6) -- ($(b6)!0.5!(b5)$) -- (b0) -- ($(b3)!0.5!(b4)$) -- (b3) -- ($(c5)!0.5!(c4)$) -- cycle;

\draw[gray,very thin] (a1) -- (a2) -- (a3) -- (a4) -- (a5) -- (a6) -- cycle;
\draw[gray,very thin] (b1) -- (b2) -- (b3) -- (b4) -- (b5) -- (b6) -- cycle;
\draw[gray,very thin] (c1) -- (c2) -- (c3) -- (c4) -- (c5) -- (c6) -- cycle;

\foreach \k [evaluate=\k as \knext using int(\k+1)] in {1,...,6} {
	\draw[gray,very thin] (a0) -- ($(a\k)!0.5!(a\knext)$);
	\draw[gray,very thin] (b0) -- ($(b\k)!0.5!(b\knext)$);
	\draw[gray,very thin] (c0) -- ($(c\k)!0.5!(c\knext)$);
	}

\draw[thick]
 (c0) -- ($(a3)!0.5!(a4)$) -- (a3) -- ($(a3)!0.5!(a2)$) -- (a0) -- ($(a4)!0.5!(a5)$) -- (a5) -- (b6) -- ($(b6)!0.5!(b5)$) -- (b0) -- ($(b3)!0.5!(b4)$) -- (b3) -- ($(c5)!0.5!(c4)$) -- cycle;

\end{tikzpicture}
\end{center}
Tale insieme è formato da 8 aquiloni, ciascuno ottenuto sezionando in sei parti un esagono regolare (esattamente come nella Figura~\ref{fig:alveare}). La storia di questa scoperta è raccontata, ad esempio, in \cite{Kla23}.
La prima figura è detta \emph{cappello}, e la seconda \emph{maglietta}. Il motivo di questi nomi dovrebbe essere chiaro dall'immagine che segue:
\begin{center}
\includegraphics[page=1]{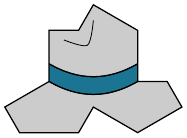}
\hspace{1.2cm}
\includegraphics[page=2]{hatshirt.pdf}
\end{center}
Nonostante l'incredibile semplicità di questi tasselli, è possibile con essi tassellare il piano ed è possibile farlo solo in modo aperiodico \cite{SMKG24a}. Inoltre ogni tassellazione è automaticamente edge-to-edge (senza bisogno di imporre alcuna regola di corrispondenza), a patto di adottare la convenzione di considerare il lato di lunghezza $2$ come due lati di lunghezza unitaria). Un esempio di tassellazione con cappelli e magliette è in Figura~\ref{fig:monotile}.

\begin{figure}[H]
\includegraphics[width=0.8\columnwidth]{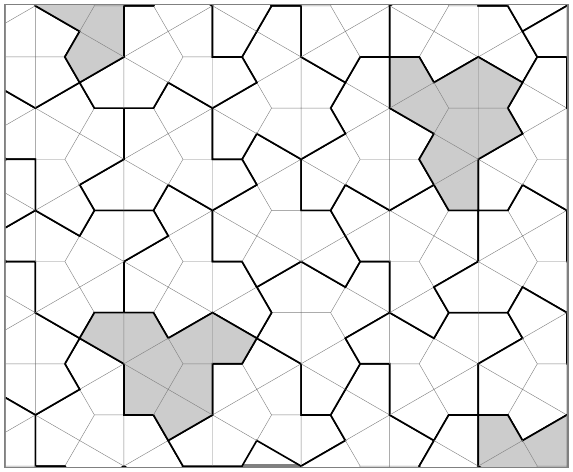}
\caption{Cappelli e magliette}\label{fig:monotile}
\end{figure}

Rispetto al protoinsieme di Socolar e Taylor, qui i tasselli sono omeomorfi ad un disco chiuso (una tassellazione con prototasselli omeomorfi ad un disco chiuso si dice \emph{normale}). Ma anche in questo caso, a seconda di come si definisce un prototassello, il protoinsieme si può considerare formato da un singolo prototassello, oppure da due.

Il cappello ha $14$ lati, alcuni di lunghezza $1$ ed altri di lunghezza $\sqrt{3}$ (se assumiamo che gli esagoni abbiano lato di lunghezza $2$). Il lato di lunghezza $2$ del cappello lo pensiamo come due lati di lunghezza $1$ consecutivi. Per ogni coppia di interi positivi $a$ e $b$ è possibile definire un prototassello, indicato con $\mathtt{Tile}(a,b)$,
mantenendo gli stessi angoli del cappello, ma sostituendo i lati di lunghezza $1$ con lati di lunghezza $a$ ed i lati di lunghezza $\sqrt{3}$ con lati di lunghezza $b$. Chiaramente $\mathtt{Tile}(a,b)$ e $\mathtt{Tile}(ka,kb)$ sono simili, per ogni $a,b,k$ positivi.
Nella figura seguente è mostrato $\mathtt{Tile}(1,1)$, sovrapposto in rosso al cappello:
\begin{center}
\begin{tikzpicture}

\coordinate (a0) at (0,0);
\coordinate (b0) at ($(120:1)+(180:1)$);
\coordinate (c0) at ($(120:1)+(60:1)$);

\foreach \k in {1,...,7} {
	\coordinate (a\k) at (240+60*\k:1);
	\coordinate (b\k) at ($(b0)+(a\k)$);
	\coordinate (c\k) at ($(c0)+(a\k)$);
	}

\begin{scope}[scale=0.5]
\coordinate (v0) at (b6);
\coordinate (v1) at ($(v0)+(2,0)$);
\coordinate (v2) at ($(v1)+(60:1)$);
\coordinate (v3) at ($(v2)+(-30:1)$);
\coordinate (v4) at ($(v3)+(30:1)$);
\coordinate (v5) at ($(v4)+(120:1)$);
\coordinate (v6) at ($(v5)+(-1,0)$);
\coordinate (v7) at ($(v6)+(0,1)$);
\coordinate (v8) at ($(v7)+(150:1)$);
\coordinate (v9) at ($(v8)+(240:1)$);
\coordinate (v10) at ($(v9)+(-1,0)$);
\coordinate (v11) at ($(v10)+(0,-1)$);
\coordinate (v12) at ($(v11)+(210:1)$);
\coordinate (v13) at ($(v12)+(-60:1)$);
\end{scope}


\begin{scope}
\clip
 (c0) -- ($(a3)!0.5!(a4)$) -- (a3) -- ($(a3)!0.5!(a2)$) -- (a0) -- ($(a4)!0.5!(a5)$) -- (a5) -- (b6) -- ($(b6)!0.5!(b5)$) -- (b0) -- ($(b3)!0.5!(b4)$) -- (b3) -- ($(c5)!0.5!(c4)$) -- cycle;

\draw[gray,very thin] (a1) -- (a2) -- (a3) -- (a4) -- (a5) -- (a6) -- cycle;
\draw[gray,very thin] (b1) -- (b2) -- (b3) -- (b4) -- (b5) -- (b6) -- cycle;
\draw[gray,very thin] (c1) -- (c2) -- (c3) -- (c4) -- (c5) -- (c6) -- cycle;

\foreach \k [evaluate=\k as \knext using int(\k+1)] in {1,...,6} {
	\draw[gray,very thin] (a0) -- ($(a\k)!0.5!(a\knext)$);
	\draw[gray,very thin] (b0) -- ($(b\k)!0.5!(b\knext)$);
	\draw[gray,very thin] (c0) -- ($(c\k)!0.5!(c\knext)$);
	}
\end{scope}

\draw[thick]
 (c0) -- ($(a3)!0.5!(a4)$) -- (a3) -- ($(a3)!0.5!(a2)$) -- (a0) -- ($(a4)!0.5!(a5)$) -- (a5) -- (b6) -- ($(b6)!0.5!(b5)$) -- (b0) -- ($(b3)!0.5!(b4)$) -- (b3) -- ($(c5)!0.5!(c4)$) -- cycle;

\fill[red!50,opacity=0.3] (v0) -- (v1) -- (v2) -- (v3) -- (v4) -- (v5) -- (v6) -- (v7) -- (v8) -- (v9) -- (v10) -- (v11) -- (v12) -- cycle;

\draw[red,very thick] (v0) -- (v1) -- (v2) -- (v3) -- (v4) -- (v5) -- (v6) -- (v7) -- (v8) -- (v9) -- (v10) -- (v11) -- (v12) -- cycle;

\end{tikzpicture}
\end{center}
A differenza del cappello, con $\mathtt{Tile}(1,1)$ è possibile tassellare in maniera periodica il piano, se utilizziamo anche la sua immagine riflessa. D'altronde, se non usiamo la sua immagine riflessa, è possibile comunque tassellare il piano, e solo in modo aperiodico. L'idea è allora di deformare leggermente i lati di $\mathtt{Tile}(1,1)$ in modo da ottenere un tassello equivalente, ma che non si riesca ad incastrare con la sua immagine riflessa. Otteniamo il prototassello chiamato
\emph{Spectre}:
\begin{center}
\includegraphics{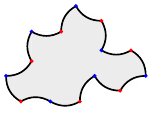}
\end{center}
per la sua forma che ricorda vagamente un fantasma \cite{SMKG24b}. Con questo, sembra che la ricerca di un ``einstein'' sia giunta alla fine. D'altronde, lo studio delle proprietà di questo monotassello aperiodico è solo agli inizi.

\section{Conclusioni}\label{sec:conc}

Nonostante i recenti sviluppi sulle tassellazioni del piano, rimangono ancora molte domande aperte e linee di ricerca interessanti.

Un problema notevole riguarda i protoinsiemi formati da un singolo tassello.
Immaginiamo di costruire una tassellazione del piano edge-to-edge con un singolo prototassello, un poligono regolare con $n$-lati ($n\geq 3$). In ciascun vertice si incontrano $k\geq 2$ poligoni. Siccome l'angolo interno di questi poligoni è $\frac{n-2}{n}\pi$, si deve avere
\[
\frac{2\pi}{k}=\frac{n-2}{n}\pi .
\]
Tale equazione ammette soluzioni $(k,n)$ intere solo per $n=3,4,6$. Il nostro poligono è quindi un triangolo, un quadrato oppure un esagono.

Cosa succede però se rinunciamo all'idea che il nostro prototassello sia un poligono regolare? Un problema tutt'ora aperto è il seguente:

\begin{pro}
Classificare tutti i poligoni convessi che possono apparire in una tassellazione del piano con singolo prototassello.
\end{pro}

Usando la formula di Eulero per grafi planari è possibile dimostrare che 
un tale monotassello può avere al massimo $6$ lati.
La congettura è che esistano solo un numero finito di poligoni convessi con i quali si può tassellare il piano. Il problema, però, è tutt'ora aperto \cite{Zon20}.
Recentemente, Rao ha completato la classificazione dei monotasselli pentagonali \cite{Rao17}, provando che ne esistono esattamente $15$.

Un altro problema facile da formulare, ma di difficile soluzione, è quello che ha motivato la costruzione dei primi prototasselli di Penrose (si veda ad esempio \cite[Cap.~10]{GS87}). Ci poniamo la seguente domanda: cosa succede se spostiamo l'attenzione dalle simmetrie delle tassellazioni alle simmetrie dei singoli tasselli? Ad esempio, cosa possiamo dire su tassellazioni in cui tutti i prototasselli hanno lo stesso tipo di simmetria? Un problema aperto è il seguente:

\begin{pro}
Trovare un esempio di tassellazione del piano in cui tutti i tasselli hanno simmetria 5-fold (o $n$-fold con $n>6$). Oppure dimostrare, sotto ipotesi ragionevoli (ad esempio, prototasselli compatti ed in numero finito), che tale tassellazione non esiste.
\end{pro}

Il lettore interessato ad una panoramica generale sulle tassellazioni può consultare \cite{GS87}, mentre per qualcosa di più recente sulle tassellazioni aperiodiche del piano
si può guardare \cite[Cap.~7]{Sen95}.
Infine, un vasto numero di esempi si può trovare nella Enciclopedia delle Tassellazioni online:
\begin{center}
\web{https://tilings.math.uni-bielefeld.de/}
\end{center}

Per quanto riguarda la dinamica simbolica, abbiamo visto come l'insieme delle successioni di indici di tassellazioni di Penrose formi uno
spazio topologico omeomorfo all'insieme di Cantor. La relazione di ``avere la stessa coda'' è un esempio di relazione ``minimale'' su uno spazio di Cantor, ovvero una relazione le cui classi di equivalenza sono tutte dense nello spazio topologico \cite{Put18}. L'insieme quoziente è il tipico esempio di spazio che si può studiare usando gli strumenti della Geometria Noncommutativa \cite{Con94}.

Numerosi esempi di relazioni minimali su spazi di Cantor si possono costruire a partire da speciali grafi orientati detti \emph{diagrammi di Bratteli}. Sotto alcune ipotesi tecniche (il diagramma deve essere \emph{semplice}) è possibile associare ad un tale diagramma 
una relazione di equivalenza su uno spazio topologico omeomorfo all'insieme di Cantor. Tale relazione ha una topologia naturale che permette di interpretarla come gruppoide étale. Questi gruppoidi possono quindi essere studiati trasformandoli in algebre speciali usando un prodotto di convoluzione. Si ottengono le algebre approssimativamente finito-dimensionali (AF) associate ai diagrammi di Bratteli.

I diagrammi di Bratteli furono introdotti originariamente dal matematico norvegese Ola Bratteli per descrivere C*-algebre che sono limiti induttivi di C*-algebre finito-dimensionali. Giocano un ruolo fondamentale nel teorema di classificazione di George Elliott di tali C*-algebre, e sono stati usati sistematicamente da Anatoly Vershik per la costruzione di subshift in dinamica simbolica.

Per chi fosse interessato alle C*-algebre e alla loro classificazione, una buon testo introduttivo è \cite{Str21}. Per una discussione su relazioni minimali su spazi di Cantor e C*-algebre AF si può guardare \cite[Cap.~6]{DanPen}.

\end{multicols}

\end{document}